\newcommand{\1}{\mbox{1}\hspace{-0.25em}\mbox{l}}
\newcommand {\diff}{{\rm d}}
\newcommand {\I}{\mathcal{I}}
\newtheorem{proposition}{Proposition}
\newtheorem{remark}{Remark}[section]
\newtheorem{lemma}{Lemma}[section]
\newtheorem{assump}{Assumption}
\newtheorem{theorem}{Theorem}
\newcommand {\R}{\mathbb{R}}
\newcommand {\F}{\mathcal{F}}
\newcommand {\p}{\mathbb{P}}
\newcommand {\G}{\mathcal{G}}
\newcommand {\B}{\mathcal{B}}
\newcommand {\E}{\mathbb{E}}
\newcommand{\conn}{\quad\text{and}\quad}
\newcommand{\orr}{\;\text{or}\;}
\newcommand{\nn}{\nonumber}
\title{On  Decomposition of the Last Passage Time of Diffusions}
\author{Masahiko Egami$^1$ and Rusudan Kevkhishvili$^2$}
\address{$^{1,2}$Graduate School of Economics, Kyoto University, Sakyo-ku, Kyoto, 606-8501, Japan}
\email{egami@econ.kyoto-u.ac.jp}
\email{kevkhishvili.rusudan.2x@kyoto-u.ac.jp}
\thanks{$^1$Phone: +81-75-753-3430. $^2$Phone: +81-75-753-3429.\\
This version: \today. The first author is in part supported by Grant-in-Aid for Scientific Research (C) No.23K01467, Japan Society for the Promotion of Science. The second author was in part supported by JSPS KAKENHI Grant-in-Aid for Early-Career Scientists No.21K13324 and No.23K12501.}
\begin{document}
\maketitle

\begin{abstract}
	For a regular transient diffusion, we provide a decomposition of its last passage time to a certain state $\alpha$. This is accomplished by transforming the original diffusion into two diffusions using the occupation time of the area above and below $\alpha$. Based on these two processes, both having a reflecting boundary at $\alpha$, we derive the decomposition formula of the Laplace transform of the last passage time explicitly in a simple form in terms of Green functions. This equation also leads to the Green function's decomposition formula. We demonstrate an application of these formulas to a diffusion with two-valued parameters.
\end{abstract}

\noindent Keywords: diffusion; last passage time; decomposition; occupation time; Green function

\noindent Mathematics Subject Classification (2010): 60J60\\

\section{Introduction}
This paper provides a decomposition of the last passage time's Laplace transform and the Green function for a general one-dimensional regular transient diffusion. Considering the last passage time to a certain state $\alpha$, the proof of the main result in Proposition \ref{prop:laplace-product} is based on the transformation of the original diffusion into two diffusions using the occupation time of the area above and below $\alpha$. To the best of our knowledge, the related Lemmas \ref{lem:exponential}-\ref{lem:lambdaB}, which are the foundations of Proposition \ref{prop:laplace-product}, are fully original.  They also provide new insights on the occupation and local times of these two diffusions since we handle two local times together in analyzing a killing time and a last passage time.  An immediate and important application of this result is Theorem \ref{theorem}, the decomposition of the Green function, the latter being one of the fundamental objects in applied mathematics (e.g. differential equations \citep{green_duffy} and potential theory including its probabilistic approach \citep{potential_doob,chung-zhao,pinsky1995}). The decomposition can be done easily as demonstrated in Section \ref{sec:OU} where we handle the Ornstein-Uhlenbeck (OU) process: its Green function involves non-elementary hard-to-treat functions. 

The decomposition formulas in Proposition \ref{prop:laplace-product} and Theorem \ref{theorem} are new results. With these formulas, the behavior of diffusions above and below a certain point $\alpha$ can be analyzed separately from the original diffusion. One example is to apply this decomposition to a diffusion whose parameters are different above and below $\alpha$. We demonstrate this point in Section \ref{sec:appl}: our results allow us to bypass the need of knowing the explicit transition density of such diffusions by reducing the original problem to the case of two non-switching diffusions. This feature is particularly important because the transition density (and its Laplace transform) in the case of switching parameters is often unavailable. Let us point out that there is no general established method in the literature for explicitly obtaining the Green function of diffusions with switching parameters. We provide this method.
In the special case of a Brownian motion with two-valued drift, \citet{benes1980} derives its Green function using the symmetry of the Brownian motion, the forward Kolmogorov equation (satisfied by the transition density), and a linear system of equations based on various conditions satisfied by the density's Laplace transform. Section \ref{sec:parameter-switch} shows that the decomposition formula saves these computations. Moreover, a diffusion with switching parameters is useful in modeling real-life problems.  For example, in Section \ref{sec:financial-appl}, we show the last passage time distribution of such a process, quantifying the leverage effect of high volatility stock.   In addition, Proposition \ref{prop:killing-rate} derives the killing rate for the diffusion above level $\alpha$ explicitly. This is also a new finding that uncovers a connection between the component diffusions in the decomposition formula.

The literature for the last passage time (or the last exit time) includes \cite{Doob1957}, \cite{nagasawa}, \cite{kunita-watanabe1966}, \cite{salminen1984},  \cite{rw-1}, \cite{chung-walsh}, \cite{revuz-yor} as well as the studies referred therein. This object is closely related to the concepts of transience/recurrence, Doob's $h$-transform, time-reversed process, and the Martin boundary theory, and has been an important subject in the probability literature. \citet{salminen1984} derives the distribution of the last passage time using the transition density of the original diffusion, which leads to its Laplace transform in terms of the original Green function \citep[Chapter II.3.20]{borodina-salminen}. See also \citet{egami-kevkhishvili-reversal}.  \emph{In contrast to the existing literature, the study of this paper is the first one to investigate the distribution of the last passage time to $\alpha$ by focusing on the regions above and below $\alpha$ separately.} Propositions \ref{prop:laplace-product}-\ref{prop:all} and Theorem \ref{theorem} characterize the behavior of the original process in these two regions, and the decomposition formulas represent a new tool for further investigation of diffusions.

A wide range of applications of last passage times  in financial modeling are discussed in \cite{nikeghbali-platen}. These applications cover the analysis of default risk, insider trading, and option valuation, which we summarize below. \cite{elliott2000} and \cite{jeanblanc_rutkowski} discuss the valuation of defaultable claims with payoff depending on the last passage time of a firm's value to a certain level. See also \cite{coculescu2012} and Chapters 4 and 5 in \cite{jeanblanc2009}.
\cite{egami-kevkhishvili-reversal} develops a new risk management framework for companies based on the last passage time of a leverage ratio to some alarming level. They derive the distribution of the time interval between the last passage time and the default time. Their analysis of company data demonstrates that the information regarding this time interval together with the distribution of the last passage time is useful for credit risk management. To distinguish the information available to a regular trader versus an insider, \cite{imkeller2002} uses the last passage time of a Brownian motion driving a stock price process. The last passage time, which is not a stopping time to a regular trader, becomes a stopping time to an insider by utilizing progressive enlargement of filtration. This study illustrates how additional information provided by the last passage time can create arbitrage opportunities. Last passage times have also been used in the  European put and call option pricing. The related studies are presented in \cite{profeta2010}. These studies show that option prices can be expressed in terms of probability distributions of last passage times. See also \cite{cheridito2012}.

The structure of the paper is the following. In the rest of this section, we summarize some mathematical facts of one-dimensional diffusion. Section \ref{sec:proof} is devoted to the proof of Proposition \ref{prop:laplace-product} and the identification of the associated killing rate. Section \ref{sec:example} is an example of last passage time decomposition. We present extensions and applications in Sections \ref{sec:Green} and \ref{sec:appl} where the decomposition for the Green function is established in a general setting (Section \ref{sec:Green}) and diffusions with switching parameters are studied in Sections \ref{sec:parameter-switch} and \ref{sec:financial-appl}, the latter being a financial application.

\subsection{Mathematical Setup}\label{sec:setup}
We refer to \citet[Chapter II]{borodina-salminen},  \citet[Chapter 15] {karlin-book}, \citet[Chapter 5]{karatzas}, \citet[Chapter 4]{IM1974}, and \citet[Chapter III]{rw-1} for diffusion processes. The main reference is the first one. Except for the proof of \eqref{lemma:psi-phi}, the facts regarding diffusions mentioned in this subsection can be found in the references above. We cite specific references for the facts that are not listed in \citet[Chapter II]{borodina-salminen}.

Let us consider a complete probability space $(\Omega,\F,\p)$
with a filtration $\mathbb{F}=(\F_t)_{t\ge 0}$ satisfying the usual conditions. Let $X$ be a regular diffusion process adapted to $\mathbb{F}$ with the state space $\mathcal{I}=(\ell,r)\subset\R$. We assume $X$ is not killed in the interior of $\mathcal{I}$, which is a standard grand assumption for a general study of regular diffusions (e.g., see \citet{salminen1984} and \citet{DK2003}). On the other hand, if $X$ hits $\ell$ or $r$, it is killed and immediately transferred to the cemetery $\Delta\notin \mathcal{I}$. The lifetime of $X$ is given by
\begin{equation*}
	\xi=\inf\{t: X(t-)=\ell \orr r\}.
\end{equation*}

Following \citet[Chapter III]{rw-1}, we write
\[X=(\Omega, \{\F_t: t\ge 0\}, \{X_t: t\ge 0\}, \{P_t: t\ge 0\}, \{\p^x: x \in \mathcal{I}\})\]
where $\p^x$ denotes the probability law of the process when it starts at $x\in \mathcal{I}$. For every $t\ge 0$, the transition function is given by $P_t: \mathcal{I}\times \B(\mathcal{I})\mapsto [0,1]$ such that for all $t,s\ge 0$ and every Borel set $A\in\B(\mathcal{I})$
\begin{equation*}
	\p^x\left(X_{t+s}\in A\mid \F_s\right)=P_t(X_s,A), \quad \p^x\textnormal{-a.s}.
\end{equation*}
\indent The dynamics of a one-dimensional diffusion are characterized by scale function, speed measure, and killing measure (see Appendix \ref{app:elements} for definitions).
The scale function and the speed measure of $X$ are given by $s(\cdot)$ and $m(\cdot)$, respectively.  The killing measure is given by $k(\cdot)$. The assumption we made above that killing does not occur in the interior of the state space is expressed by $k(\diff x)=0$ for $x\in \mathcal{I}$.

We assume that $X$ is transient. The transience is equivalent to one or both of the boundaries being attracting; that is, $s(\ell)>-\infty$ and/or $s(r)<+\infty$. See Proposition 5.22 in \citet[Chapter 5]{karatzas} and \citet{salminen1984}. Note that $s(\ell):=s(\ell+)$ and $s(r):=s(r-)$.
 Transient diffusion can also be obtained from originally recurrent diffusion (such as Brownian motion and Ornstein-Uhlenbeck process) by including a killing boundary in its state space. Such setup is often used in engineering, economics, finance, and other scientific fields when dealing with real-life problems. For example, we refer the reader to \citet{linetsky2015} for financial engineering applications such as derivative pricing. Applications in neuroscience are discussed in \citet{bibbona2013}. For optimal stopping problems, refer to \citet{alvarez-matomaki2014}. Hence transient diffusions are useful in modeling.

To obtain concrete results, we set a specific assumption:
\begin{assump}\normalfont\label{assump-s}
	\begin{equation*}\label{eq:assumption}
		s(\ell)>-\infty\conn s(r)=+\infty.
	\end{equation*}
\end{assump}
\noindent Then, it holds that
\begin{equation*}
	\p^x\left(\lim_{t\to\xi}X_t=\ell\right)=1, \quad \forall x\in \mathcal{I}
\end{equation*}
(see Proposition 5.22 in \citet[Chapter 5]{karatzas}). That is, killing occurs at $\ell$.
For the later reference, we state the definition of the killing rate of a diffusion:
the infinitesimal killing rate $\gamma(x)$ at $x\in \mathcal{I}$ is
\begin{equation}\label{eq:rate-def}
	\gamma(x):=\lim_{s\downarrow 0}\frac{1}{s}\left(1-\p^x(\xi>s)\right).
\end{equation}
Assumption \ref{assump-s} is necessary to fix a method to prove Proposition \ref{prop:laplace-product}.  But we shall remove this assumption in Proposition \ref{prop:all}.

For every $t>0$ and $x\in \mathcal{I}$, $P_t(x,\cdot):A\mapsto P_t(x,A)$ is absolutely continuous with respect to the speed measure $m$:
\begin{equation*}
	P_t(x,A)=\int_Ap(t;x,y)m(\diff y), \quad A\in\B(\mathcal{I}).
\end{equation*}
As discussed in \citet[Chapter 4.11]{IM1974}, the transition density $p$ may be constructed to be positive and jointly continuous in all variables as well as symmetric satisfying $p(t;x,y)=p(t;y,x)$.

We use superscripts $+$ and $-$ to denote the right and left derivatives of some function $f$ with respect to the scale function:
\begin{equation}\label{eq:right-left-derivative}
	f^+(x):=\lim_{h\downarrow 0}\frac{f(x+h)-f(x)}{s(x+h)-s(x)}, \quad f^-(x):=\lim_{h\downarrow 0}\frac{f(x)-f(x-h)}{s(x)-s(x-h)}.
\end{equation}
The infinitesimal generator $\G$ is defined by
\begin{equation}\label{eq:original-G}
\G f: =\lim_{t\downarrow 0}\frac{P_t f-f}{t}
\end{equation} applied to bounded continuous functions $f$ defined in $\mathcal{I}$ for which the limit exists pointwise, is a bounded continuous function in $\mathcal{I}$, and $\sup_{t>0}||\frac{P_t f-f}{t}||<\infty$ with the sup norm $||\cdot||$. We assume $s$ and $m$ are absolutely continuous with respect to the Lebesgue measure and have smooth derivatives. With this assumption together with a continuous second derivative of $s$,
the generator $\G$ coincides with the second-order differential operator given by
\begin{eqnarray}\label{eq:diff-operator}
	\G f(x)=\frac{1}{2}\sigma^2(x)f''(x)+\mu(x)f'(x), \quad x\in \mathcal{I}
\end{eqnarray}
where $\mu(\cdot)$ and $\sigma(\cdot)$ denote infinitesimal drift and diffusion parameters, respectively. We assume $\sigma^2(x)>0$ for all $x\in\mathcal{I}$. To ensure that $\diff X_t=\mu(X_t)\diff t+\sigma(X_t)\diff W_t$ (with a standard Brownian motion $W$) has a weak solution, we impose a standard condition on $\mu$ and $\sigma$:
\[\forall x\in\mathcal{I}, \; \exists\varepsilon>0 \; \textnormal{such that} \; \int_{x-\varepsilon}^{x+\varepsilon}\frac{1+|\mu(y)|}{\sigma^2(y)}\diff y<\infty.\]
See \citet[Chapter 5, Theorem 5.15]{karatzas}.
Consider the equation $\G u = q u$ for $q>0$.  Under the original definition of $\G$ in \eqref{eq:original-G}, it should read as follows: $u$ is a function which satisfies
\[
q\int_{[a, b)}u(x)m(\diff x)=u^{-}(b)-u^{-}(a)
\] for all $a, b$ such that $\ell <a<b<r$. But in the absolute continuous case, $u$ is the solution to $\G u=q u$ for $\G$ in \eqref{eq:diff-operator}, so that the existence of $u$ is part of the definition of the generator.
From the generator equation we have
\begin{equation}\label{eq:scale-speed}
s(x)=\int^xe^{-\int^y\frac{2\mu(u)}{\sigma^2(u)}\diff u}\diff y, \quad m(\diff x)=\frac{2e^{\int^x\frac{2\mu(u)}{\sigma^2(u)}\diff u}}{\sigma^2(x)}\diff x.
\end{equation}
Note that such definitions and assumptions for the scale function and speed measure are used in \citet[ Chapter 5]{karatzas} and \citet[Chapter 15]{karlin-book} and that $s(x)$ satisfies $\G s=0$ on $\I$.

The Laplace transform of the hitting time $H_z:=\inf\{t\ge 0:X_t=z\}$ for $z\in \mathcal{I}$ is given by
\begin{equation}\label{eq:hitting-time-laplace}
	\E^x\left[e^{-qH_z}\right]=\begin{cases}
				\frac{\phi_q(x)}{\phi_q(z)}, \quad x\ge z,\\
        \frac{\psi_q(x)}{\psi_q(z)}, \quad x\le z,
	\end{cases}
\end{equation}
where the continuous positive functions $\psi_q$ and $\phi_q$ denote linearly independent solutions of the ODE $\G f=qf$ with $q>0$. Here $\psi_q$ is increasing while $\phi_q$ is decreasing. They are unique up to a multiplicative constant, once the boundary conditions at $\ell$ and $r$ are specified. Finally, the \emph{Green function} is defined as
\begin{equation}\label{eq:green-q}
	G_q(x,y):=\begin{cases}
		\frac{\psi_q(y)\phi_q(x)}{w_q}, \quad x\ge y, \\
\frac{\psi_q(x)\phi_q(y)}{w_q}, \quad x\le y,
	\end{cases}
\end{equation}
with the \emph{Wronskian} $w_q:=\psi_q^+(x)\phi_q(x)-\psi_q(x)\phi_q^+(x)=\psi_q^-(x)\phi_q(x)-\psi_q(x)\phi_q^-(x)$. It holds that $G_q(x,y)=\int_{0}^{\infty}e^{-qt}p(t;x,y)\diff t$ for $x,y\in \mathcal{I}$.

Under Assumption \ref{assump-s}, the killing boundary $\ell$ is attracting and $\lim_{x\downarrow\ell}\E^x\left[e^{-qH_z}\right]=\frac{\psi_q(\ell+)}{\psi_q(z)}=0$ for $z\in \mathcal{I}$. Hence $\psi_q(\ell+)=0$. As the right boundary $r$ is not attracting, $\lim_{z\uparrow r}\E^x\left[e^{-qH_z}\right]=\frac{\psi_q(x)}{\psi_q(r-)}=0$ for $x\in \mathcal{I}$ and we obtain $\psi_q(r-)=+\infty$.

Next, due to the transience of $X$, we define
\begin{equation}\label{eq:G0}
	G_0(x,y):=\lim_{q\downarrow 0}G_q(x,y)=\int_{0}^{\infty}p(t;x,y)\diff t<+\infty.
\end{equation}
Following \citet[Section 4.11]{IM1974}, this quantity is represented by
\begin{equation}\label{eq:green-0}
	G_0(x,y)=\begin{cases}
\frac{\psi_0(y)\phi_0(x)}{w_0}, \quad x\ge y, \\
		\frac{\psi_0(x)\phi_0(y)}{w_0}, \quad x\le y,
			\end{cases}
\end{equation}
where the continuous positive functions $\psi_0$ and $\phi_0$ denote (linearly independent) solutions of the ODE $\G f=0$ and
\begin{equation*}\label{eq:w0}
w_0:=\psi_0^+(x)\phi_0(x)-\psi_0(x)\phi_0^+(x)=\psi_0^-(x)\phi_0(x)-\psi_0(x)\phi_0^-(x).
\end{equation*}
Here $\psi_0$ is increasing while $\phi_0$ is decreasing. These functions are uniquely determined based on the boundary conditions and satisfy
\begin{equation}\label{lemma:psi-phi}
\phi_0\equiv1, \qquad
\psi_0(\ell+)=0, \qquad
\psi_0(r-)=+\infty
\end{equation}
under Assumption \ref{assump-s}. We show a proof of \eqref{lemma:psi-phi} in Appendix \ref{app:proof-phi-psi} since to our knowledge it is not shown in the existing literature.   The first equation $\phi_0\equiv 1$ should be understood such that the solution $\phi_0$ can be taken as unity.

Since $\psi_0$ solves $\G f=0$ and is increasing, we can set $\psi_0(x)=w_0 (s(x)+\text{constant})$.  Then, the boundary condition at $\ell$ determines the constant, i.e.,
\[\psi_0(x)=w_0(s(x)-s(\ell)), \quad x\in \mathcal{I},\]
which in turn leads to
\begin{equation}\label{eq:G0-explicit}
	G_0(x,y)=(s(x)-s(\ell))\wedge (s(y)-s(\ell)),
\end{equation} since $\phi_0\equiv 1$ by \eqref{lemma:psi-phi}. Note that $w_0$ is forced to be $\psi_0^-$.

Our analysis focuses on a decomposition of the last passage time of some fixed level $\alpha\in \mathcal{I}$ which is denoted by
\begin{equation}\label{eq:lambda}
	\lambda_\alpha:=\sup\{t:X_t=\alpha\}
\end{equation}
with $\sup\emptyset=0$. Our objective is to decompose the Laplace transform of $\lambda_\alpha$ in a simple formula convenient for use (see Proposition \ref{prop:laplace-product}).
As $X$ is a transient diffusion, $\lambda_\alpha<+\infty$ a.s. The distribution of the last passage time is given by
\begin{equation*}\label{eq:lambda-dist}
	\p^x(0<\lambda_\alpha\le t)=\int_0^t\frac{p(u;x,\alpha)}{G_0(\alpha,\alpha)}\diff u.
\end{equation*}
See \citet[Chapter II.3.20]{borodina-salminen}, \citet[Proposition 4]{salminen1984}, \citet{egami-kevkhishvili-reversal}. Then, the Laplace transform is
\begin{equation}\label{eq:lambda-laplace}
	\E^x\left[e^{-q\lambda_\alpha}\right]=\int_{0}^{\infty}e^{-qt}\frac{p(t;x,\alpha)}{G_0(\alpha,\alpha)}\diff t=\frac{G_q(x,\alpha)}{G_0(\alpha,\alpha)}, \quad x\ge\alpha.
\end{equation}
To the best of our knowledge, these two formulas are essentially the known results about last passage time distribution.   Our objective is to study its decomposition by focusing on the regions above and below $\alpha$ separately. This results in a simple decomposition formula connecting two diffusions with reflecting boundaries at $\alpha$. We derive this new result in Section \ref{sec:proof}.

\section{Decomposition of $X$ and its last passage time}\label{sec:proof}
Consider the transient diffusion $X$ on $\mathcal{I}=(\ell, r)$ with Assumption \ref{assump-s} and its last passage time $\lambda_\alpha$ of some fixed level $\alpha\in \mathcal{I}$ as defined in \eqref{eq:lambda}. We draw a schematic diagram of the dynamics of $X$ along with $\lambda_\alpha$ in Figure \ref{fig:whole_path}.

\subsection{Time-changed processes}\label{sec:time-changed process}
Let us fix some $\alpha\in \mathcal{I}$ and consider an occupation time of the region above and below $\alpha$
\begin{equation*}\label{eq:occup_time}
	\Gamma_+(t):=\int_{0}^{t}\mathbf{1}_{\{X_s\ge\alpha\}}\diff s \conn
	\Gamma_-(t):=\int_{0}^{t}\mathbf{1}_{\{X_s<\alpha\}}\diff s
\end{equation*}
together with its right inverse:
\begin{equation*}\label{eq:inverse_occup}
	\Gamma_+^{-1}(t):=\inf\{s:\Gamma_+(s)>t\} \conn
	\Gamma^{-1}_-(t):=\inf\{s: \Gamma_-(s)>t\}.
\end{equation*}
Define $\lambda^A_\alpha:=\Gamma_+(\lambda_\alpha)$ and $\lambda^B_\alpha:=\Gamma_-(\lambda_\alpha)$.
Then, it holds that
\begin{equation*}\label{eq:add-lambda}
	\lambda_\alpha=\lambda^A_\alpha + \lambda^B_\alpha.
\end{equation*}
Throughout the paper,
\begin{center}
superscript $A$ stands for ``\emph{above} the level $\alpha$"  and 
superscript $B$ stands for ``\emph{below} the level $\alpha$".
\end{center}
We will use the following time-changed processes:
\begin{equation}\label{eq:X-time-change}
	\hat{X}^A(t):=X(\Gamma_+^{-1}(t))\conn X^B(t):=X(\Gamma_-^{-1}(t)).
\end{equation}
Time-change by additive functionals (such as occupation time) are discussed in \citet[Chapter 15.8]{karlin-book}. See also \citet{rogers1991} and \citet{walsh1978} where such techniques are employed for the path decomposition. Our interest lies in how we can decompose the last passage time $\lambda_\alpha$ of $X$ to the point $\alpha$ in terms of $\hat{X}^A$ and $X^B$.
	
Note that $\hat{X}^A$ and $X^B$ have the same speed measure and scale function as $X$ \citep[Theorem 10.12]{dynkin1}. Then, $X^B$ can be seen as the process for which $\alpha$ is a reflecting boundary. Similarly, $\hat{X}^A$ can be considered as the process for which $\alpha$ is an elastic boundary. For the definition of an elastic boundary, refer to \citet[Chapter II.1.7]{borodina-salminen}. The hat is to stress that $\hat{X}^A$ is a killed process with the non-zero killing measure $\hat{k}^A$.
See Figure \ref{fig:picture} for this probabilistic feature.

\begin{figure}[H]	
	\begin{subfigure}{0.75\textwidth}
		\begin{center}
\hspace*{-7cm}			
  \includegraphics[scale=0.7]{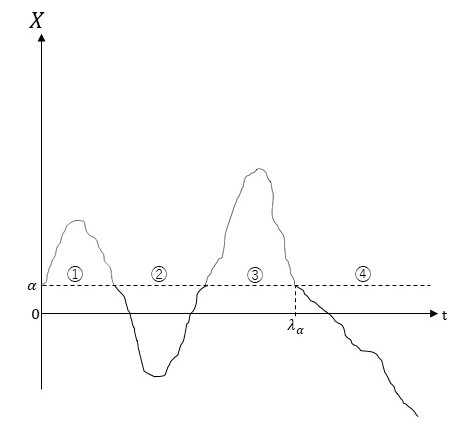}
			\subcaption{The original process $X$ and its last passage time $\lambda_\alpha$.}
			\label{fig:whole_path}
		\end{center}
	\end{subfigure}
	\begin{subfigure}{0.75\textwidth}
		\hspace*{-5cm}	\includegraphics[scale=0.7]{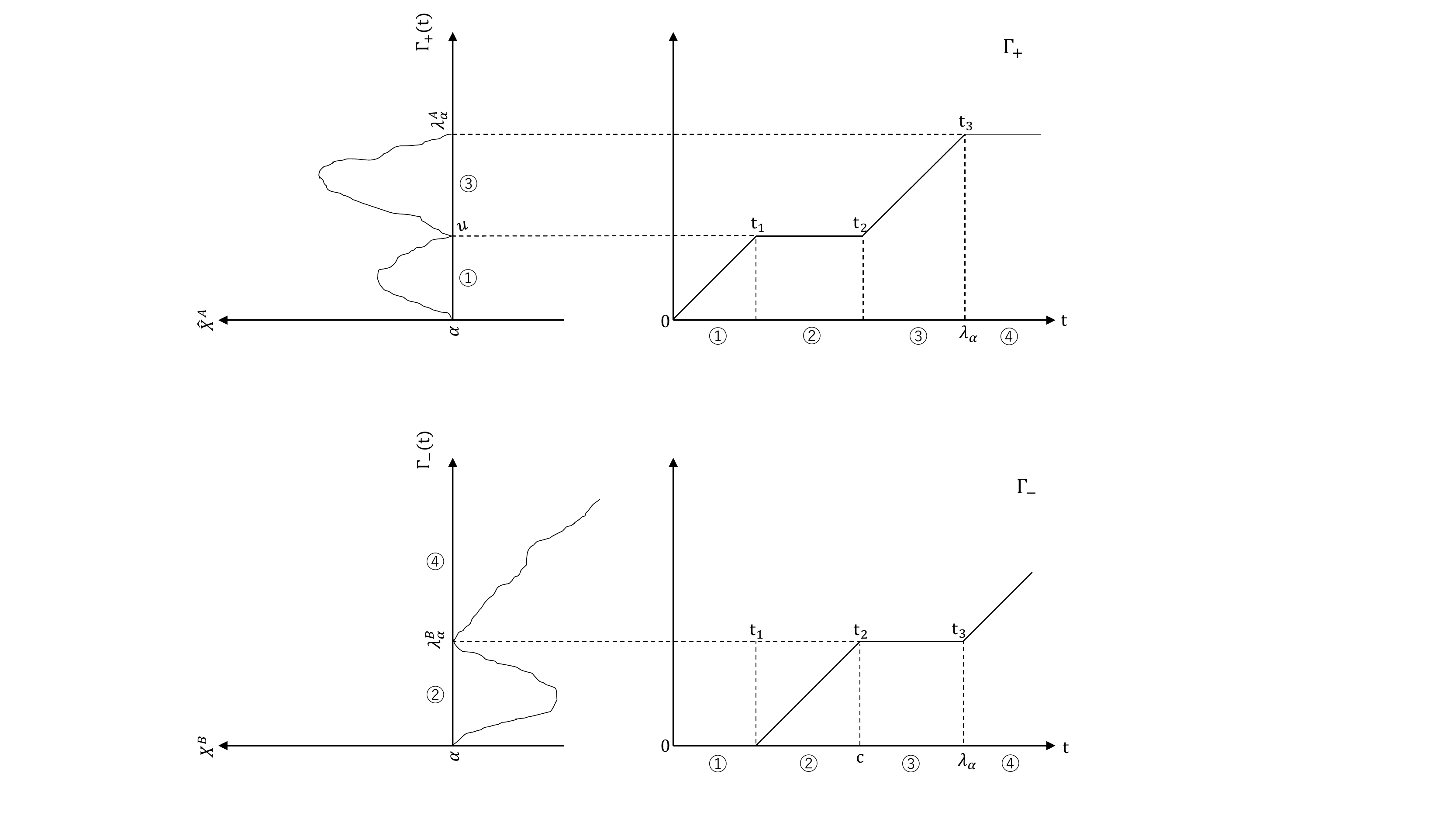}
		\subcaption{A schematic expression of $\hat{X}^A$ and $X^B$. Note that the graphs of $\hat{X}^A$ and $X^B$ are presented by rotating $90^\circ$ degrees counter-clockwise. }
		\label{fig:picture}
	\end{subfigure}
	\caption{}
\end{figure}
Further note that
\[\hspace{-1cm}\text{$\lambda_\alpha^A$ is considered as the killing time of $\hat{X}^A$ and $\lambda_\alpha^B$ is the last passage time of $X^B$ to level $\alpha$}.
\]

Let us introduce another diffusion $X^A$ on $[\alpha, r)$ for which $\alpha$ is a reflecting boundary. The process $X^A$ has the same speed measure and scale function as $\hat{X}^A$ (hence the same  as $X$) but its killing measure is \emph{zero}. To be precise, we denote
\begin{equation}\label{eq:Xhat}
	\hat{X}^A(t)=\begin{cases}
		X^A(t), \quad 0\le t <\lambda_\alpha^A\\
		\Delta, \quad t\ge \lambda_\alpha^A
	\end{cases}
\end{equation} to distinguish the killed process $\hat{X}^A$ from $X^A$. Even after time $\lambda_\alpha^A$, the scale function, speed measure and the zero killing measure of $X^A$ are unchanged and $X^A$ continues to be reflected at $\alpha$. The killing measure of $\hat{X}^A$ satisfies $\hat{k}^A(\{\alpha\})>0$ and $\hat{k}^A(\diff x)=0$ for $x\neq \alpha$. Note that the speed measure $\hat{m}^A$ of $\hat{X}^A$ satisfies $\hat{m}^A(\{\alpha\})=0$.

Let us summarize the above construction for later use:
\begin{remark}\normalfont\label{rem:three-processes}
  Under Assumption \ref{assump-s}, $X^A$ is a diffusion on $\mathcal{I}^A:=[\alpha, r)$  with zero killing measure in $\mathcal{I}^A$. $\hat{X}^A$ is defined by \eqref{eq:X-time-change} on $\mathcal{I}^A$ with a non-zero killing measure in $\mathcal{I}^A$ and $X^B$ is defined by \eqref{eq:X-time-change} on $\mathcal{I}^B:=(\ell, \alpha]$ with zero killing measure in $\mathcal{I}^B$. All the three processes have the reflecting point at $\alpha$ and their scale functions and speed measures are the same as those of the original process $X$. The decomposition of the original process $X$ into $\hat{X}^A$ and $X^B$ is illustrated in Figure \ref{fig:picture}.
\end{remark}

We begin with the case where $X$ starts at $\alpha$, so that $\lambda_\alpha>0$. This assumption is relaxed in Theorem \ref{theorem} in Section \ref{sec:Green}, where we treat explicitly the possibility of $\lambda_\alpha=0$. The main result shows that the Laplace transform of the last passage time $\lambda_\alpha$ can be decomposed into two parts:
\begin{proposition}\label{prop:laplace-product}
	Under Assumption \ref{assump-s}, the Laplace transform of $\lambda_\alpha$ in \eqref{eq:lambda} is represented as
	\begin{equation}\label{eq:prop-laplace}
		\E^\alpha[e^{-q\lambda_\alpha}]=\frac{G_q^A(\alpha, \alpha)}{G^A_q(\alpha, \alpha)+G^B_q(\alpha, \alpha)}\cdot \frac{G^B_q(\alpha, \alpha)}{G^B_0(\alpha, \alpha)}
	\end{equation}
	where $G^A_\cdot(\cdot, \cdot)$ and $G^B_\cdot(\cdot, \cdot)$ are the Green functions of $X^A$ (not $\hat{X}^A$) and $X^B$, respectively.
\end{proposition}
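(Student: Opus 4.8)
The plan is to exploit the decomposition $\lambda_\alpha = \lambda_\alpha^A + \lambda_\alpha^B$ together with a conditional independence structure produced by the strong Markov property at the successive visits of $X$ to $\alpha$. First I would observe that, since $X$ starts at $\alpha$ and since $X^B$ is the reflected diffusion on $(\ell,\alpha]$, the quantity $\lambda_\alpha^B=\Gamma_-(\lambda_\alpha)$ is precisely the last passage time of $X^B$ to its reflecting boundary $\alpha$. By the analogue of \eqref{eq:lambda-laplace} applied to $X^B$ (whose Green functions are $G^B_\cdot$), one gets $\E^\alpha\bigl[e^{-q\lambda_\alpha^B}\bigr]=G^B_q(\alpha,\alpha)/G^B_0(\alpha,\alpha)$, which is the second factor on the right-hand side of \eqref{eq:prop-laplace}. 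The first factor must therefore be identified as $\E^\alpha\bigl[e^{-q\lambda_\alpha^A}\bigr]$, i.e.\ the Laplace transform of the killing time of $\hat X^A$, and — crucially — one must show that $\lambda_\alpha^A$ and $\lambda_\alpha^B$ are independent under $\p^\alpha$, so that the Laplace transform of the sum factorizes.

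The key step is thus the factorization $\E^\alpha\bigl[e^{-q\lambda_\alpha}\bigr]=\E^\alpha\bigl[e^{-q\lambda_\alpha^A}\bigr]\,\E^\alpha\bigl[e^{-q\lambda_\alpha^B}\bigr]$. I would establish this by splitting the path of $X$ at $\alpha$: the excursions of $X$ above $\alpha$, reassembled via the time change $\Gamma_+^{-1}$, form $\hat X^A$, while the excursions below $\alpha$, reassembled via $\Gamma_-^{-1}$, form $X^B$; these two families of excursions are conditionally independent given the local time of $X$ at $\alpha$ (excursion theory, or equivalently the alternating renewal structure of successive up- and down-excursions governed by the local-time clock at $\alpha$). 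Since $\lambda_\alpha^A$ is a functional of the above-$\alpha$ excursions only and $\lambda_\alpha^B$ of the below-$\alpha$ excursions only, and since the event "the excursion straddling $\lambda_\alpha$ is the final one" is itself determined by the local-time clock, the two are independent. This is precisely where Lemmas \ref{lem:exponential}--\ref{lem:lambdaB} (the killing time of $\hat X^A$ being exponential in the local time at $\alpha$, and the relevant expressions for $\lambda_\alpha^B$) do the real work, so I would invoke them here.

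The remaining step is to compute $\E^\alpha\bigl[e^{-q\lambda_\alpha^A}\bigr]$ and show it equals $G_q^A(\alpha,\alpha)/\bigl(G_q^A(\alpha,\alpha)+G_q^B(\alpha,\alpha)\bigr)$. Here $\lambda_\alpha^A$ is the lifetime $\hat\xi^A$ of $\hat X^A$, whose killing measure is concentrated at $\alpha$ with mass $\hat k^A(\{\alpha\})$; by the standard formula for a diffusion killed via an atom of killing measure at a reflecting point, the lifetime is exponentially distributed with respect to the local time $L^\alpha$ of $X^A$ at $\alpha$, with parameter proportional to $\hat k^A(\{\alpha\})$, and $\E^\alpha[e^{-qL^\alpha_t}]$-type computations give $\E^\alpha\bigl[e^{-q\hat\xi^A}\bigr] = \bigl(1+c\,G_q^A(\alpha,\alpha)\bigr)^{-1}$ for an explicit constant $c$ tied to $\hat k^A(\{\alpha\})$. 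Matching this against the target form forces $c = 1/G_q^B(\alpha,\alpha)$ after the right normalization of local time; equivalently, one reads off the killing rate of $\hat X^A$ from the consistency of the decomposition — which is exactly the content of Proposition \ref{prop:killing-rate}. The main obstacle I anticipate is precisely pinning down this normalization constant $c$ and justifying rigorously that the time change turns the excursion-straddling event and the local-time clock into a genuine independence between $\lambda_\alpha^A$ and $\lambda_\alpha^B$; the algebra relating Green functions, Wronskians, and the atom $\hat k^A(\{\alpha\})$ is routine once that structural independence is in hand.
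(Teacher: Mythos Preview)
Your plan has a genuine gap: $\lambda_\alpha^A$ and $\lambda_\alpha^B$ are \emph{not} independent under $\p^\alpha$, so the factorization $\E^\alpha[e^{-q\lambda_\alpha}]=\E^\alpha[e^{-q\lambda_\alpha^A}]\,\E^\alpha[e^{-q\lambda_\alpha^B}]$ fails. To see why, let $\tau:=L(\lambda_\alpha,\alpha)=L^B(\lambda_\alpha^B)=L^B(\infty)$ be the total local time of $X$ (equivalently of $X^B$) at $\alpha$. Excursion theory gives that the above-excursion process and the below-excursion process are independent, and that $\tau\sim\mathrm{Exp}(1/G_0^B(\alpha,\alpha))$ is determined by the below-excursions (it is the local-time index of the first infinite below-excursion). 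One then has $\lambda_\alpha^A=\rho^A(\tau)$ with $\rho^A$ independent of $\tau$, but $\lambda_\alpha^B=\rho^B(\tau-)$ \emph{also} depends on $\tau$. Hence both $\lambda_\alpha^A$ and $\lambda_\alpha^B$ depend on the common variable $\tau$, and your assertion that ``$\lambda_\alpha^A$ is a functional of the above-$\alpha$ excursions only'' is incorrect: the \emph{amount} of local time at which $\hat X^A$ is killed is dictated by the below-excursions. A direct computation confirms the failure: since $\E^\alpha[e^{-q\rho^A(s)}]=e^{-s/G_q^A(\alpha,\alpha)}$ and $\tau\sim\mathrm{Exp}(1/G_0^B(\alpha,\alpha))$, one gets
\[
\E^\alpha\bigl[e^{-q\lambda_\alpha^A}\bigr]=\E^\alpha\bigl[e^{-q\rho^A(\tau)}\bigr]=\frac{G_q^A(\alpha,\alpha)}{G_q^A(\alpha,\alpha)+G_0^B(\alpha,\alpha)},
\]
which is \emph{not} the first factor $G_q^A/(G_q^A+G_q^B)$ in \eqref{eq:prop-laplace}. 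Multiplying by $\E^\alpha[e^{-q\lambda_\alpha^B}]=G_q^B/G_0^B$ therefore gives the wrong answer for every $q>0$.

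This also explains why your attempt to identify the constant $c$ goes astray. The unconditional killing rate of $\hat X^A$ at $\alpha$ (the atom $\hat k^A(\{\alpha\})$) is $1/G_0^B(\alpha,\alpha)$, not $1/G_q^B(\alpha,\alpha)$; deducing $c=1/G_q^B$ ``by matching against the target form'' is circular, and it is not what Proposition~\ref{prop:killing-rate} says. That proposition computes a \emph{conditional} killing rate $\gamma_q$, namely under the event $\{\lambda^B\le\Gamma_-(\mathbf e_q)\}$ for an independent exponential clock $\mathbf e_q$, and finds $\gamma_q=1/G_q^B(\alpha,\alpha)$---a $q$-dependent quantity precisely because of the conditioning. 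The paper's proof never asserts independence of $\lambda_\alpha^A$ and $\lambda_\alpha^B$; instead it writes $\E^\alpha[e^{-q\lambda_\alpha}]=\p^\alpha(\lambda\le\mathbf e_q)$, decomposes this as a conditional probability times a marginal (equation \eqref{eq:main}), and then Lemmas~\ref{lem:exponential}--\ref{lem:lemma2} identify the conditional term with $G_q^A/(G_q^A+G_q^B)$ via local-time comparisons at the exponential time. If you want to salvage your excursion-theoretic route, the correct object to compute is the \emph{joint} transform $\E^\alpha[e^{-\tau/G_q^A-q\lambda_\alpha^B}]$, which does yield \eqref{eq:prop-laplace}; but this requires exactly the kind of analysis carried out in those lemmas.
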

\begin{proof}
	The series of Lemmas \ref{lem:exponential}-\ref{lem:lambdaB} in the next subsection lead to this result.
\end{proof}

Before we start with the lemmas, we need to introduce the local time at $\alpha$ for $\hat{X}^A$ and $X^B$ by denoting
\begin{equation}\label{eq:local-time-definition}
\hat{L}^A(t):=\hat{L}^A(t, \alpha)=L(\Gamma^{-1}_+(t), \alpha)\conn L^B(t):=L^B(t, \alpha)=L(\Gamma^{-1}_-(t), \alpha),
\end{equation} where $L(\cdot, \alpha)$ is the local time of $X$ at $\alpha$.  Let us also define the inverse local time processes
\begin{equation}\label{eq:ro-def}
\hspace{-1cm}
\hat{\rho}^A(s):=\hat{\rho}^A(s, \alpha)=\inf\{t: \hat{L}^A(t, \alpha)>s\} \conn \rho^B(s):=\rho^B(s, \alpha)=\inf\{t: L^B(t, \alpha)>s\}.
\end{equation}
For a standard Brownian motion, local times such as $\hat{L}^A$ and $L^B$ are referred to as intrinsic local time (see \citet{rogers1991}).
\begin{remark}\label{rem:indep}\normalfont
	Due to the Markov property of $X$, the excursions of $X^A$ from $\alpha$ are independent of the excursions of $X^B$ from $\alpha$: in particular during $[0, \lambda_\alpha^A)$. Recall that $X^A(t)=\hat{X}^A(t)$ and $L^A(t)=\hat{L}^A(t)$ on $[0, \lambda_\alpha^A)$ where $L^A(t)$ is the local time at $\alpha$ for $X^A$. Refer to Figure \ref{fig:picture}.  Note that the excursion of $X^B$ commencing at time $0$ by the clock $\Gamma_-(\cdot)$ occurs when $X^A$ returns to $\alpha$ at time $u$ by the clock $\Gamma_+(\cdot)$. This excursion of $X^A$ corresponds to the time interval $[0,t_1)$ in the real clock $(t)$ and is independent of $X^B$ and hence of $L^B$. Then, the excursion of $X^A$ commencing at time $u$ by the clock $\Gamma_+(\cdot)$ occurs when $X^B$ returns to $\alpha$ at time $\lambda_\alpha^B$ by the clock $\Gamma_-(\cdot)$. This excursion of $X^B$ corresponds to the time interval $[t_1,t_2)$ in the real clock $(t)$ and is independent of $X^A$ and hence of $L^A$.
	In this way, the construction of $X^A$ and $X^B$ in \eqref{eq:X-time-change} and \eqref{eq:Xhat} implies that $L^A(\cdot)$ and $L^B(\cdot)$ are independent.
\end{remark}

We use superscript $B$ to denote quantities associated with $X^B$. We denote Green functions of $X^A$ and $X^B$ by $G^A_\cdot(\cdot, \cdot)$ and $G^B_\cdot(\cdot, \cdot)$, respectively. Let us stress that $G^A_\cdot$ is \emph{not} the Green function of $\hat{X}^A$.

\subsection{The Laplace transform of $\lambda_\alpha$}
Let us introduce a (generic) exponential random variable $\mathbf{e_q}$ with rate $q>0$ which is independent of $X$.  Hence it is independent of both $\hat{X}^A$ and $X^B$. Recall \eqref{eq:Xhat} which states that $\hat{X}^A(t)=X^A(t)$ for $t\in [0, \lambda_\alpha^A)$. In this subsection, the argument is concerned with the time interval $[0, \lambda_\alpha^A)$, so that we deal with $X^A$, not $\hat{X}^A$. For simplicity, in the sequel we omit the subscript $\alpha$ to denote $\lambda:=\lambda_\alpha$,  $\lambda^A:=\lambda^A_\alpha$, and $\lambda^B:=\lambda^B_\alpha$.

Let us start with
\begin{align}\label{eq:main}
	\E^\alpha[e^{-q\lambda}]&=\p^\alpha(\lambda \le \mathbf{e_q})=\p^\alpha(\Gamma_+(\lambda)\le \Gamma_+(\mathbf{e_q}), \Gamma_-(\lambda)\le \Gamma_-(\mathbf{e_q}))\nn \\
	&=\p^\alpha(\lambda^A\le \Gamma_+(\mathbf{e_q}), \lambda^B\le \Gamma_- (\mathbf{e_q}))\nn \\
	&=\p^\alpha\left[\lambda^A\le \Gamma_+(\mathbf{e_q})\mid \lambda^B\le \Gamma_- (\mathbf{e_q})\right]\p^\alpha(\lambda^B\le \Gamma_- (\mathbf{e_q})).
\end{align}
We shall compute explicitly the right-hand side of \eqref{eq:main}.  Let us first consider the set $\{\omega: \lambda^B(\omega)\le \Gamma_-(\mathbf{e_q})(\omega)\}$.
\begin{lemma}\label{lem:exponential}
	Let $\mathbf{e_q}$ be a (generic) exponential random variable with rate $q>0$.
	Define $P:=\{\omega: \lambda^B(\omega)\le \Gamma_-(\mathbf{e_q})(\omega)\}$ and $Q:=\{\omega:\lambda^B(\omega)\le \mathbf{e_q}(\omega)\}$. Then the sets $P$ and $Q$ are equivalent.  Similarly, the sets $\{\omega: \lambda^A(\omega)\le \Gamma_+(\mathbf{e_q})(\omega)\}$ and $\{\omega:\lambda^A(\omega)\le \mathbf{e_q}(\omega)\}$ are equivalent.
\end{lemma}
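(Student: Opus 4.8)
The plan is to prove the equivalence of $P$ and $Q$ (and of the two $A$-sets) as an equality of events up to a $\p^\alpha$-null set, by treating the two inclusions separately. One inclusion is completely soft: since $\Gamma_-(t)=\int_0^t\mathbf 1_{\{X_s<\alpha\}}\diff s\le t$ for every $t\ge 0$, we have $\Gamma_-(\mathbf e_q)\le\mathbf e_q$ pathwise, so $\{\lambda^B\le\Gamma_-(\mathbf e_q)\}\subseteq\{\lambda^B\le\mathbf e_q\}$, and similarly $\Gamma_+(t)\le t$ gives $\{\lambda^A\le\Gamma_+(\mathbf e_q)\}\subseteq\{\lambda^A\le\mathbf e_q\}$. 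No measure-theoretic care enters here; all the work is in the reverse inclusion.

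For the reverse inclusion I would condition on $\sigma(X_t:t\ge 0)$ and exploit the independence and lack of memory of $\mathbf e_q$. Once the path of $X$ is fixed, $\lambda^B=\Gamma_-(\lambda_\alpha)$ and the clocks $\Gamma_\pm$ are deterministic, and because $\Gamma_-$ is continuous and non-decreasing, $\{\lambda^B\le\Gamma_-(\mathbf e_q)\}$ becomes $\{\mathbf e_q\ge\tau\}$ with $\tau:=\inf\{t:\Gamma_-(t)\ge\lambda^B\}$, while $\{\lambda^B\le\mathbf e_q\}$ is just $\{\mathbf e_q\ge\lambda^B\}$. Taking conditional probabilities, with $\mathbf e_q\perp X$, reduces the claimed equivalence to a purely pathwise comparison of the two thresholds $\tau$ and $\lambda^B$ (and, for the $A$-pair, of $\inf\{t:\Gamma_+(t)\ge\lambda^A\}$ and $\lambda^A$), after which one integrates back over $X$.

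To settle that pathwise comparison I would feed in the geometry of $X$ near its last passage time. First, after $\lambda_\alpha$ the process never returns to $\alpha$, and since $X$ is transient with $s(r)=+\infty$ (Assumption~\ref{assump-s}) it converges to $\ell<\alpha$; by path continuity $X_t<\alpha$ on the whole interval $(\lambda_\alpha,\xi)$, so $\Gamma_-$ runs at unit rate on $[\lambda_\alpha,\xi)$ while $\Gamma_+$ is frozen at $\lambda^A$ there. Second, $\lambda_\alpha$ is a left-accumulation point of the level set $\{t:X_t=\alpha\}$: this set is the a.s.\ perfect closed support of the local time $L(\cdot,\alpha)$, $\alpha$ being a regular, non-sticky interior point because the speed measure is absolutely continuous, so excursions of strictly positive Lebesgue length on each side of $\alpha$ occur in every left-neighbourhood of $\lambda_\alpha$; consequently $\Gamma_-(t)<\Gamma_-(\lambda_\alpha)=\lambda^B$ and $\Gamma_+(t)<\lambda^A$ for all $t<\lambda_\alpha$. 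These two facts pin down $\tau$ and its $A$-analogue, hence the conditional probabilities coincide; combined with the trivial inclusions $P\subseteq Q$ (and likewise for the $A$-sets), equality of probabilities upgrades to equality of events outside a $\p^\alpha$-null set.

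The step I expect to be the main obstacle is the second structural fact: turning ``$\lambda_\alpha$ is not isolated in the $\alpha$-level set'' into the quantitative statement that, almost surely, the occupation clock $\Gamma_-$ (resp.\ $\Gamma_+$) has not yet reached its terminal value $\lambda^B$ (resp.\ $\lambda^A$) at any instant strictly before $\lambda_\alpha$. This is where the a.s.\ perfectness of the level set and the two-sided regularity of $\alpha$ are indispensable, together with the exclusion of the null event on which $X$ would remain on one side of $\alpha$ throughout an interval abutting $\lambda_\alpha$; the remaining ingredients — monotonicity of $\Gamma_\pm$, memorylessness of $\mathbf e_q$, and the drift to $\ell$ after $\lambda_\alpha$ — are routine.
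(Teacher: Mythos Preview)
Your pathwise reduction is carried out correctly, but the conclusion you draw from it is wrong. Conditioning on the path of $X$, you find $P=\{\mathbf e_q\ge\tau\}$ with $\tau:=\inf\{t:\Gamma_-(t)\ge\lambda^B\}$, and your two structural facts (oscillation on both sides of $\alpha$ just before $\lambda_\alpha$, and $X_t<\alpha$ on $(\lambda_\alpha,\xi)$) pin down $\tau=\lambda_\alpha$, \emph{not} $\lambda^B$. Thus the thresholds you are comparing are $\lambda_\alpha$ and $\lambda^B$, and these differ by $\lambda^A>0$ almost surely under $\p^\alpha$. The conditional probabilities are $e^{-q\lambda_\alpha}$ and $e^{-q\lambda^B}$, which do not coincide; concretely, $Q\setminus P=\{\lambda^B\le\mathbf e_q<\lambda_\alpha\}$ has strictly positive probability. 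So $P$ and $Q$ are \emph{not} equal up to a null set, and your scheme of ``match conditional probabilities and upgrade via $P\subseteq Q$'' cannot close.

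The paper does not attempt a pathwise identification. Its argument is distributional and hinges on the word \emph{generic}: starting from $\omega\in Q$, memorylessness makes $\mathbf e_q-\lambda^B$ a fresh $\mathrm{Exp}(q)$ variable, and since $\Gamma_-$ runs at unit speed on $[\lambda_\alpha,\xi)$ one has $J:=\Gamma_-^{-1}(\mathbf e_q)=\lambda_\alpha+(\mathbf e_q-\lambda^B)$; the paper then argues that $J$ inherits the memoryless property and is therefore itself a ``generic'' exponential clock of rate $q$, which may replace $\mathbf e_q$ in the definition of $P$. In other words the lemma is being used as a licence to swap one independent exponential clock for another in the downstream probability computations (Lemmas~\ref{lem:first-term}--\ref{lem:lambdaB}), not as an equality of events for a fixed $\mathbf e_q$. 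Your conditioning-on-$X$ framework freezes the coupling between $\mathbf e_q$ and the path, so it cannot reproduce this substitution; to align with the paper you would have to abandon the per-$\omega$ viewpoint and argue at the level of the joint law of the clock and the time-changed process $X^B$.
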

\begin{proof}
	Suppose that $\omega\in P$. Then $\lambda^B\le \Gamma_-(\mathbf{e_q})\le \mathbf{e_q}$ by the definition of $\Gamma_-(\cdot)$, so that $\omega\in Q$.  On the other hand, suppose that $\omega\in Q$.  This implies that by the memoryless property
	\begin{equation}\label{eq:(1)}
		\mathbf{e_q}-\lambda^B=e'_q \circ \theta(\lambda^B)
	\end{equation} where $e'_q$ is another exponential random variable with rate $q$ and $\theta(\cdot)$ is the shift operator.  Define $J:=\Gamma^{-1}_-(\mathbf{e_q})$.  Since $\Gamma^{-1}_-(\lambda^B)=\lambda$ and $\Gamma_{-}(t)$  is strictly increasing on $t\ge \lambda$, we have
	\begin{equation}\label{eq:(2)}
		\mathbf{e_q}-\lambda^B=J-\lambda=J'\circ \theta(\lambda)
	\end{equation} for some nonnegative random variable $J'$.  From \eqref{eq:(1)} and \eqref{eq:(2)}, $J'$ is an exponential random variable with rate $q$. Then, the representation
	\begin{equation}\label{eq:J}
		J=\lambda+J'\circ \theta(\lambda)
	\end{equation} implies that it is also an exponential random variable with rate $q$. Indeed, $J$ is a continuous random variable and \eqref{eq:J} shows that $J$ has the memoryless property as $\mathbf{e_q}$ in \eqref{eq:(1)} does, so that $J$ must be an exponential random variable with rate $q$. Now, $\lambda\le J$ implies that $\lambda^B=\Gamma_-(\lambda)\le \Gamma_-(J)$.  By rewriting $J$ as a generic exponential random variable $\mathbf{e_q}$, we conclude that $\omega\in P$.
\end{proof}
The next two lemmas are concerned with the first term of \eqref{eq:main}, the conditional probability.
\begin{lemma}\label{lem:first-term}
	For the exponential random variable $\mathbf{e_q}$ with rate $q>0$, we have
	\begin{equation*}
		\p^\alpha[\lambda^A\le \Gamma_+(\mathbf{e_q})\mid \lambda^B\le \Gamma_- (\mathbf{e_q})]=\p^\alpha[L^B(\Gamma_-(\mathbf{e_q}))<L^A(\Gamma_+(\mathbf{e_q}))\mid \lambda^B\le \Gamma_- (\mathbf{e_q})].
	\end{equation*}
\end{lemma}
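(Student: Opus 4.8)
The plan is to prove the stronger, pathwise statement that, up to a $\p^\alpha$-null set,
$$\{\lambda^A\le\Gamma_+(\mathbf{e_q})\}\cap\{\lambda^B\le\Gamma_-(\mathbf{e_q})\}=\{L^B(\Gamma_-(\mathbf{e_q}))<L^A(\Gamma_+(\mathbf{e_q}))\}\cap\{\lambda^B\le\Gamma_-(\mathbf{e_q})\},$$
after which dividing both sides by $\p^\alpha(\lambda^B\le\Gamma_-(\mathbf{e_q}))$ gives the claimed equality of conditional probabilities; by Lemma~\ref{lem:exponential} one may, wherever convenient, replace the conditioning event $\{\lambda^B\le\Gamma_-(\mathbf{e_q})\}$ by $\{\lambda^B\le\mathbf{e_q}\}$.

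The first step is to read the killing time $\lambda^A$ of $\hat X^A$ off the local time of $X^A$ at $\alpha$. Since $\alpha$ is an elastic boundary for $\hat X^A$, the process is killed at the first instant $L^A$ reaches a random threshold $\Theta$; I would identify $\Theta$ by passing to the limit $t\uparrow\lambda^A$ in the identity $L^A(t)=\hat L^A(t)=L(\Gamma_+^{-1}(t),\alpha)$, which holds on $[0,\lambda^A)$ by \eqref{eq:local-time-definition} and \eqref{eq:Xhat}, together with $\Gamma_+^{-1}(t)\uparrow\lambda_\alpha$. This yields $\Theta=L(\lambda_\alpha,\alpha)$, the amount of local time the original diffusion piles up at $\alpha$ before its last visit. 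Running the same time-change computation for $X^B$ and $\Gamma_-$ gives $L(\lambda_\alpha,\alpha)=L^B(\lambda^B)$; and, by the excursion-decoupling of Remark~\ref{rem:indep}, this common value is a functional of the below-excursions only, hence independent of $X^A$. So the threshold that governs the death of $\hat X^A$ is \emph{supplied by} the below-process and equals the local time $X^B$ has accumulated at $\alpha$ at the moment of its last visit there. Finally, on $\{\lambda^B\le\Gamma_-(\mathbf{e_q})\}$ the below-process has already performed its last visit to $\alpha$ by below-clock time $\Gamma_-(\mathbf{e_q})$, and since $L^B$ is constant thereafter, $L^B(\Gamma_-(\mathbf{e_q}))=L^B(\lambda^B)=\Theta$ on that event. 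Combining, on $\{\lambda^B\le\Gamma_-(\mathbf{e_q})\}$ the event $\{\lambda^A\le\Gamma_+(\mathbf{e_q})\}$ --- that $L^A$ has reached $\Theta$ by time $\Gamma_+(\mathbf{e_q})$ --- is precisely $\{L^A(\Gamma_+(\mathbf{e_q}))>L^B(\Gamma_-(\mathbf{e_q}))\}$, which is the set on the right.

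The step I expect to be the main obstacle is the rigorous accounting of the elastic killing in the first step: pinning down that $\hat X^A$ is killed exactly at the $L^A$-level $L(\lambda_\alpha,\alpha)$, with the two local times at $\alpha$ normalized compatibly (both inherited from $L(\cdot,\alpha)$ through the time changes $\Gamma_\pm^{-1}$), and that the inequality between $L^A(\Gamma_+(\mathbf{e_q}))$ and $L^B(\Gamma_-(\mathbf{e_q}))$ comes out strict rather than weak. This requires controlling how the killing measure $\hat k^A$ concentrated at $\alpha$ transforms under $\Gamma_+^{-1}$, and using the continuity of $t\mapsto L^A(t)$ together with the fact that, conditionally on $\{\lambda^B\le\Gamma_-(\mathbf{e_q})\}$, the law of $\Gamma_+(\mathbf{e_q})$ carries no atom at the relevant level; the independence of Remark~\ref{rem:indep} is what makes $\Theta$ external to $X^A$ and is what will ultimately let the conditional probability be evaluated in the subsequent lemmas.
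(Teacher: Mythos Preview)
Your proposal is correct and follows essentially the same route as the paper: the paper introduces $u:=\sup\{t<\lambda^A:X^A_t=\alpha\}$ and uses $\Gamma_-(\Gamma_+^{-1}(u))=\lambda^B$ to obtain $L^A(u)=L^B(\lambda^B)$ (your $\Theta$), then characterizes $\lambda^A=\inf\{t:L^A(t)>L^A(u)\}$ and notes that on $\{\lambda^B\le\Gamma_-(\mathbf{e_q})\}$ one has $L^B(\Gamma_-(\mathbf{e_q}))=L^B(\lambda^B)$, yielding the set equality you state. Your worry about the strict inequality is handled in the paper precisely by this inverse-local-time characterization of $\lambda^A$; the only cosmetic difference is that you pass through $L(\lambda_\alpha,\alpha)$ of the original process while the paper works directly with the time $u$, but these encode the same identity.
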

\begin{proof}
	Define
	\begin{equation}\label{eq:u}
		u:=\sup\{t<\lambda^A: X^A_t=\alpha\};
	\end{equation}
	that is, the last time of visit to $\alpha$ before $\hat{X}^A$ is elastically killed at time $\lambda^A=\Gamma_+(\lambda)$.  This time point $u$ is characterized as
	$\Gamma_-(\Gamma_+^{-1}(u))=\lambda^B$, and hence we have
	\begin{equation}\label{eq:LA=LB}
		L^A(u)=L^B(\lambda^B).
	\end{equation}  It may be useful to see the schematic diagram in Figure \ref{fig:picture} where the time $u$ is marked in the upper left panel.
	
	Since the local time $L^A(\cdot)$ of $X^A$ (see \eqref{eq:local-time-definition} and Remark \ref{rem:indep}) shall not increase until the next visit to $\alpha$ by $X^A$ (at time $\lambda^A$), we have
	\begin{equation}\label{eq:L^A-new-characterization}
		\lambda^A=\inf\{t: L^A(t)>L^A(u)\},
	\end{equation} which implies $L^A(\lambda^A)>L^B(\lambda^B)$ from \eqref{eq:LA=LB}.
	
	Now we condition on $\lambda^B\le \Gamma_-(\mathbf{e_q})$.  Under this condition, it is easy to see that the event  $\{\lambda^A\le \Gamma_+(\mathbf{e_q})\}$ occurs when and only when the equal sign holds.
Due to the argument in the preceding paragraph, under the condition
	$\lambda^B\le \Gamma_-(\mathbf{e_q})$, $\{\lambda^A\le \Gamma_+(\mathbf{e_q})\}=\{L^A(\Gamma_+(\mathbf{e_q}))>L^B(\lambda^B)\}$ but we also have  \[L^A(\Gamma_+(\mathbf{e_q}))=L^A(\lambda^A)>L^B(\lambda^B)=L^B(\Gamma_-(\mathbf{e_q})),\]
	which proves the lemma.
\end{proof}

\begin{lemma}\label{lem:lemma2}
	For the exponential random variable $\mathbf{e_q}$ with rate $q>0$, we have
	\begin{equation}\label{eq:lemma2}
		\p^\alpha[L^B(\Gamma_-(\mathbf{e_q}))<L^A(\Gamma_+(\mathbf{e_q}))\mid \lambda^B\le \Gamma_- (\mathbf{e_q})]=\frac{G_q^A(\alpha, \alpha)}{G^A_q(\alpha, \alpha)+G^B_q(\alpha, \alpha)}.
	\end{equation}
\end{lemma}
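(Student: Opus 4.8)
The plan is to reduce \eqref{eq:lemma2}, via Lemma \ref{lem:first-term}, to a competition between the two inverse local time clocks of $X^A$ and $X^B$ at $\alpha$ and to exploit their independence. By Lemma \ref{lem:first-term} the quantity to evaluate is $\p^\alpha[\lambda^A\le\Gamma_+(\mathbf{e_q})\mid\lambda^B\le\Gamma_-(\mathbf{e_q})]$, and I would carry this out in the local-time scale at $\alpha$, which is common to $X$, $X^A$, $X^B$ since all three share the speed measure of $X$, working conditionally on the total local time $L^B(\lambda^B)$ that $X^B$ accumulates at $\alpha$. The shape $\tfrac{G^A_q(\alpha,\alpha)}{G^A_q(\alpha,\alpha)+G^B_q(\alpha,\alpha)}$ is exactly the ``probability that the above-clock wins'' in such a competition, which guides the whole computation.

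Two ingredients drive the argument. First, by Remark \ref{rem:indep} the local times $\hat L^A$ and $L^B$ at $\alpha$, hence their right inverses $\hat\rho^A$ and $\rho^B$ from \eqref{eq:ro-def}, are independent; in the local-time scale the excursions of $X$ above and below $\alpha$ split into two independent Poisson point processes whose superposition is the excursion process of $X$. Second, for a regular diffusion issued from a reflecting point the inverse local time there is a subordinator (possibly killed), and with the normalisation of local time prescribed by the speed measure its Laplace exponent equals the reciprocal of the Green function on the diagonal; thus $\E^\alpha[e^{-q\hat\rho^A(s)}]=e^{-s/G^A_q(\alpha,\alpha)}$ and $\E^\alpha[e^{-q\rho^B(s)}]=e^{-s/G^B_q(\alpha,\alpha)}$ for all $s\ge0$. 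Here $\hat\rho^A$ is a genuine (non-killed) subordinator, since under Assumption \ref{assump-s} the reflected diffusion $X^A$ is recurrent so that $G^A_0(\alpha,\alpha)=+\infty$, while $\rho^B$ is killed at rate $1/G^B_0(\alpha,\alpha)$ because $X^B$ is transient towards $\ell$, the killing occurring exactly at the level $L^B(\lambda^B)$.

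Next I would translate the event. On $\{\lambda^B\le\Gamma_-(\mathbf{e_q})\}$ the exponential clock has already exhausted the below-$\alpha$ part of the path, so $L^B(\Gamma_-(\mathbf{e_q}))=L^B(\lambda^B)$, and — arguing exactly as in the proof of Lemma \ref{lem:first-term} — the event $\{L^B(\Gamma_-(\mathbf{e_q}))<L^A(\Gamma_+(\mathbf{e_q}))\}$ is the event that $X^A$ performs (at least) one further excursion from $\alpha$, i.e.\ that its local time $L^A$ advances strictly past the level $L^B(\lambda^B)$, within the real-time budget $\mathbf{e_q}$. Splitting the independent exponential time $\mathbf{e_q}$ along the combined inverse local time $\rho:=\hat\rho^A+\rho^B$ of $X$: conditionally on $(\hat\rho^A,\rho^B)$, the local-time level at which $\mathbf{e_q}$ is used up is the first atom of a Poisson process of rate $q$ read in the random clock $\diff\rho$, and, by the independence of the above- and below-excursion processes, whether the critical level $L^B(\lambda^B)$ is overtaken by the $\hat\rho^A$-clock or is first reached by the killing of $\rho^B$ is decided by a thinning in which the two alternatives carry, after the Laplace transform in $q$, the weights $1/G^A_q(\alpha,\alpha)$ and $1/G^B_q(\alpha,\alpha)$. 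Taking expectations collapses the conditional probability to
\[
\p^\alpha\bigl[\lambda^A\le\Gamma_+(\mathbf{e_q})\mid\lambda^B\le\Gamma_-(\mathbf{e_q})\bigr]
=\frac{1/G^B_q(\alpha,\alpha)}{1/G^A_q(\alpha,\alpha)+1/G^B_q(\alpha,\alpha)}
=\frac{G^A_q(\alpha,\alpha)}{G^A_q(\alpha,\alpha)+G^B_q(\alpha,\alpha)},
\]
which is \eqref{eq:lemma2}.

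The main obstacle, I expect, is exactly this last step: making the competition/thinning description precise — identifying, on the conditioning set, $\{L^B(\Gamma_-(\mathbf{e_q}))<L^A(\Gamma_+(\mathbf{e_q}))\}$ with a clean event about which of the two independent inverse local times reaches the critical level $L^B(\lambda^B)$ first under the exponential clock, and then performing the Laplace computation honestly (including the bookkeeping of the killing of $\rho^B$, which is what realises the conditioning event). Once this is in place, everything else follows from Lemma \ref{lem:first-term}, Remark \ref{rem:indep}, and the subordinator/Green-function identity recalled above.
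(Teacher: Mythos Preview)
Your ingredients are exactly the right ones --- independence of the two local-time clocks (Remark \ref{rem:indep}) and the identity $\E^\alpha[e^{-q\rho^A(s)}]=e^{-s/G^A_q(\alpha,\alpha)}$, $\E^\alpha[e^{-q\rho^B(s)}]=e^{-s/G^B_q(\alpha,\alpha)}$ --- and they are precisely what the paper uses. But you have made the route harder than necessary by backing through Lemma \ref{lem:first-term} and then trying to compute the \emph{conditional} probability directly via an excursion-theoretic thinning argument. That is the step you yourself flag as the obstacle, and indeed you have not carried it out.

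The paper sidesteps this obstacle entirely. Its key move is to \emph{remove the conditioning first}: on $\{\lambda^B\le\Gamma_-(\mathbf{e_q})\}$ one has $L^B(\Gamma_-(\mathbf{e_q}))=L^B(\lambda^B)$ (as you noted), and by the memoryless property both $\Gamma_-(\mathbf{e_q})-\lambda^B$ and $\Gamma_+(\mathbf{e_q})-u$ are again exponential with rate $q$ (this is argued exactly as in Lemma \ref{lem:exponential}). Hence evaluating $L^B$ at $\Gamma_-(\mathbf{e_q})$ and $L^A$ at $\Gamma_+(\mathbf{e_q})$ is the same, in law, as evaluating both at a fresh independent $\mathbf{e_q}$, and the conditional probability collapses to the \emph{unconditional}
\[
\p^\alpha\bigl(L^B(\mathbf{e_q})<L^A(\mathbf{e_q})\bigr).
\]
Now your subordinator identity says that $L^A(\mathbf{e_q})$ and $L^B(\mathbf{e_q})$ are exponential with rates $1/G^A_q(\alpha,\alpha)$ and $1/G^B_q(\alpha,\alpha)$; by Remark \ref{rem:indep} they are independent; and the comparison of two independent exponentials gives
\[
\p^\alpha\bigl(L^B(\mathbf{e_q})<L^A(\mathbf{e_q})\bigr)=\frac{1/G^B_q(\alpha,\alpha)}{1/G^A_q(\alpha,\alpha)+1/G^B_q(\alpha,\alpha)}=\frac{G^A_q(\alpha,\alpha)}{G^A_q(\alpha,\alpha)+G^B_q(\alpha,\alpha)}.
\]
So the ``competition'' picture you were aiming for is correct, but it lives at the level of $L^A(\mathbf{e_q})$ versus $L^B(\mathbf{e_q})$ after the conditioning has been stripped away by memorylessness; no Poisson thinning on the combined clock $\hat\rho^A+\rho^B$ is needed.
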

\begin{proof}
	First, we shall prove that the left-hand side of \eqref{eq:lemma2} simplifies to
	\begin{equation}\label{eq:interim}
		\p^\alpha[L^B(\Gamma_-(\mathbf{e_q}))<L^A(\Gamma_+(\mathbf{e_q}))\mid \lambda^B\le \Gamma_- (\mathbf{e_q})]=\p^\alpha(L^B(\mathbf{e_q})<L^A(\mathbf{e_q})).
	\end{equation}
	Indeed, given the fact  $\lambda^B\le \Gamma_-(\mathbf{e_q})$, due to Lemma \ref{lem:exponential},
	\begin{equation}\label{eq:reduceB-eq}
		\Gamma_-(\mathbf{e_q})-\lambda^B=\mathbf{e_q}\circ \theta(\lambda^B)=\mathbf{e_q}-\lambda^B.
	\end{equation}
Let us denote (see the lower right panel in Figure \ref{fig:picture})
	\[
	c:=\inf\{t: \Gamma_-(t)\ge \lambda^B\},
	\] for which the condition $\lambda^B\le \Gamma_-(\mathbf{e_q})$ implies that $c\le \mathbf{e_q}$.  Since the time point $c$ is the left-end point of a region where $\Gamma_-(\cdot)$ becomes constant, it corresponds to the left-end point of an excursion of $X^A$ from level $\alpha$. Hence $\Gamma_+(c)=u$, the right-hand side being defined in \eqref{eq:u}. Using the same argument as in Lemma \ref{lem:exponential}, we have
	\begin{equation}\label{eq:reduceA-eq}
		\Gamma_+(\mathbf{e_q})-u=\mathbf{e_q} \circ \theta(u)=\mathbf{e_q}-u.
	\end{equation}
	 Recall also \eqref{eq:LA=LB}. In other words, equations \eqref{eq:reduceB-eq} and \eqref{eq:reduceA-eq} imply that, instead of evaluating $L^B$ and $L^A$ at $\Gamma_-(\mathbf{e_q})$  and $\Gamma_+(\mathbf{e_q})$, we can evaluate $L^B$ and $L^A$ both at time $\mathbf{e_q}$. We have proved \eqref{eq:interim} using the memoryless property of $\mathbf{e_q}$.
	
	Let us now evaluate $\p^\alpha(L^B(\mathbf{e_q})<L^A(\mathbf{e_q}))$.  It is known that the random variables $L^B(\mathbf{e_q})$ and $L^A(\mathbf{e_q})$ are exponentially distributed and
	\[
	\p^\alpha(L^A(\mathbf{e_q})>s)=\p^\alpha(\rho^A(s)<\mathbf{e_q})=\E^\alpha[e^{-q\rho^A(s)}]=\exp\left(-\frac{s}{G^A_q(\alpha, \alpha)}\right)
	\] where $\rho^A(s):=\inf\{t: L^A(t, \alpha)>s\}$ is the inverse local time process for $X^A$ (c.f. \eqref{eq:ro-def}). See \citet[Section 7]{getoor1979}. Similarly, we have
	\begin{equation}\label{eq:rate-LB}
		\p^\alpha(L^B(\mathbf{e_q})>s)=\exp\left(-\frac{s}{G^B_q(\alpha, \alpha)}\right).
	\end{equation}
	Since $\mathbf{e_q}$ is independent of $X$, $L^A(\mathbf{e_q})$ and $L^B(\mathbf{e_q})$ are independent. See Remark \ref{rem:indep}. We have
	\[
	\p^\alpha(L^B(\mathbf{e_q})<L^A(\mathbf{e_q}))=\frac{\frac{1}{G^B_q(\alpha, \alpha)}}{\frac{1}{G^A_q(\alpha, \alpha)}+\frac{1}{G^B_q(\alpha, \alpha)}},
	\] which yields \eqref{eq:lemma2}.
\end{proof}
\noindent By the two lemmas, we have computed the first term of \eqref{eq:main} on its right-hand side:
\begin{equation}\label{eq:the-first-term}
	\p^\alpha[\lambda^A\le \Gamma_+(\mathbf{e_q})\mid \lambda^B\le \Gamma_- (\mathbf{e_q})]=\frac{G_q^A(\alpha, \alpha)}{G^A_q(\alpha, \alpha)+G^B_q(\alpha, \alpha)}.
\end{equation}  Let us proceed to the second term of \eqref{eq:main}.

\begin{lemma}\label{lem:lambdaB}
	It holds that for the exponential random variable $\mathbf{e_q}$ with rate $q>0$,
	\begin{equation}\label{eq:second-term}
		\p^\alpha(\lambda^B\le \Gamma_- (\mathbf{e_q}))=\frac{G^B_q(\alpha, \alpha)}{G^B_0(\alpha, \alpha)}.
	\end{equation}
\end{lemma}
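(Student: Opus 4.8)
The plan is to reduce the left-hand side of \eqref{eq:second-term} to a last passage time statement for the single diffusion $X^B$ and then invoke the known last-passage-time Laplace transform formula \eqref{eq:lambda-laplace} applied to $X^B$ in place of $X$. By Lemma \ref{lem:exponential}, the events $\{\lambda^B\le \Gamma_-(\mathbf{e_q})\}$ and $\{\lambda^B\le \mathbf{e_q}\}$ coincide, so it suffices to compute $\p^\alpha(\lambda^B\le \mathbf{e_q})$. The first key observation is that $\lambda^B=\Gamma_-(\lambda_\alpha)$ is exactly the last passage time of the time-changed process $X^B$ to the level $\alpha$: by the construction in \eqref{eq:X-time-change}, $X^B(t)=X(\Gamma_-^{-1}(t))$ visits $\alpha$ for the last time precisely at the $\Gamma_-$-clock value corresponding to the last visit of $X$ to $\alpha$, i.e.\ at $\Gamma_-(\lambda_\alpha)=\lambda^B$. (This is already recorded in the text after Figure \ref{fig:picture}.) Since $X^B$ starts at $\alpha$, we have $\lambda^B>0$.

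Second, I would identify $X^B$ as a transient diffusion on $\mathcal{I}^B=(\ell,\alpha]$ for which \eqref{eq:lambda-laplace} applies. Indeed, $X^B$ has the same scale function $s$ and speed measure $m$ as $X$, restricted to $(\ell,\alpha]$, with a reflecting boundary at $\alpha$ (zero killing measure in the interior, per Remark \ref{rem:three-processes}). Under Assumption \ref{assump-s}, $s(\ell)>-\infty$, so the left boundary $\ell$ is still attracting and killing for $X^B$, hence $X^B$ is transient and $\lambda^B<\infty$ a.s. Using the independence of $\mathbf{e_q}$ from $X$ (and hence from $X^B$), $\p^\alpha(\lambda^B\le\mathbf{e_q})=\E^\alpha[e^{-q\lambda^B}]$, and \eqref{eq:lambda-laplace} applied to $X^B$ at the starting point $x=\alpha$ (which equals the reflecting boundary, the analogue of the state $\alpha$ in the general formula) gives
\begin{equation*}
	\E^\alpha[e^{-q\lambda^B}]=\frac{G^B_q(\alpha,\alpha)}{G^B_0(\alpha,\alpha)}.
\end{equation*}
Here $G^B_q$ is the $q$-Green function of $X^B$ and $G^B_0=\lim_{q\downarrow 0}G^B_q$ is finite by the transience of $X^B$; combining with Lemma \ref{lem:exponential} yields \eqref{eq:second-term}.

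The step I expect to require the most care is the rigorous justification that $\lambda^B$ is genuinely the last passage time of $X^B$ to $\alpha$ and that formula \eqref{eq:lambda-laplace} is applicable in this reflected-boundary setting. One must check that the time change $\Gamma_-^{-1}$ maps the last visit of $X$ to $\alpha$ to the last visit of $X^B$ to $\alpha$ (there is no ``loss'' of the level $\alpha$ under the time change because $\alpha$ is a reflecting, not absorbing, boundary of $X^B$), and that the distributional formula $\p^x(0<\lambda_\alpha\le t)=\int_0^t p(u;x,\alpha)/G_0(\alpha,\alpha)\,\diff u$ — and hence its Laplace transform \eqref{eq:lambda-laplace} — holds for a transient diffusion whose state interval has a reflecting endpoint at the target level $\alpha$. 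This is exactly the situation covered by \citet[Chapter II.3.20]{borodina-salminen} and \citet[Proposition 4]{salminen1984} (which only require transience and that $\alpha$ be a regular point with $G^B_0(\alpha,\alpha)<\infty$), so no new argument is needed beyond verifying these hypotheses for $X^B$. The remaining manipulations — replacing $\Gamma_-(\mathbf{e_q})$ by $\mathbf{e_q}$ via Lemma \ref{lem:exponential}, and converting the probability $\p^\alpha(\lambda^B\le\mathbf{e_q})$ into the Laplace transform $\E^\alpha[e^{-q\lambda^B}]$ using independence of $\mathbf{e_q}$ — are routine.
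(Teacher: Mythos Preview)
Your proposal is correct and follows essentially the same approach as the paper: invoke Lemma \ref{lem:exponential} to replace $\Gamma_-(\mathbf{e_q})$ by $\mathbf{e_q}$, convert the probability into $\E^\alpha[e^{-q\lambda^B}]$ via independence, and then apply \eqref{eq:lambda-laplace} to the transient diffusion $X^B$. Your additional justification that $X^B$ is transient and that the reflected-boundary setting is covered by the cited last-passage-time formula is more detailed than the paper's one-line proof, but the underlying argument is identical.
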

\begin{proof}
	Due to Lemma \ref{lem:exponential}, $\p^\alpha(\lambda^B\le\Gamma_-(\mathbf{e_q}))=\p^\alpha(\lambda^B\le \mathbf{e_q})=\E^\alpha[e^{-q\lambda^B}]$. Using the expression of the Laplace transform of the last passage time in \eqref{eq:lambda-laplace}, we obtain \eqref{eq:second-term}.
\end{proof}

By combining Lemmas \ref{lem:first-term}-\ref{lem:lambdaB}, i.e., plugging \eqref{eq:the-first-term} and \eqref{eq:second-term} into \eqref{eq:main}, we obtain the result of Proposition \ref{prop:laplace-product}.

\subsection{The killing rate of $\hat{X}^A$}
In this subsection, we shall find the infinitesimal killing rate of $\hat{X}^A$ at $\alpha$ under the condition $\lambda_B\le \Gamma_-(\mathbf{e_q})$. We denote this rate by $\gamma_q$. Recall that the killing time for the process $\hat{X}^A$ has been denoted by $\lambda^A=\Gamma_+(\lambda)$. By \eqref{eq:rate-def}, $\gamma_q$ is given by
\begin{equation}\label{eq:gamma-q}
	\gamma_q:=\lim_{s\downarrow 0}\frac{1}{s}\left(1-\p^\alpha[\lambda^A>s\mid \lambda^B\le \Gamma_- (\mathbf{\mathbf{e_q}})]\right).
\end{equation}
For the purpose of finding $\gamma_q$, we represent the first term on the right-hand side of \eqref{eq:main} in an alternative way. More specifically, we shall prove the following:
\begin{proposition}\label{prop:killing-rate}
	The first term on the right-hand side of \eqref{eq:main} has the representation in terms of the infinitesimal killing rate $\gamma_q$
	\begin{equation}\label{eq:alternative}
		\p^\alpha[\lambda^A\le \Gamma_+(\mathbf{e_q})\mid \lambda^B\le \Gamma_- (\mathbf{e_q})]=\frac{\gamma_q\cdot G^A_q(\alpha, \alpha)}{1+\gamma_q\cdot G^A_q(\alpha, \alpha)},
	\end{equation}
	where $\gamma_q=\frac{1}{G^B_q(\alpha, \alpha)}$.
\end{proposition}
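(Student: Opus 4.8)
The proposition makes two assertions---the value $\gamma_q=1/G^B_q(\alpha,\alpha)$ and the representation \eqref{eq:alternative}---and, granting the first, the second is pure algebra: inserting $\gamma_q=1/G^B_q(\alpha,\alpha)$ into the right-hand side of \eqref{eq:alternative} and clearing denominators returns $G^A_q(\alpha,\alpha)/(G^A_q(\alpha,\alpha)+G^B_q(\alpha,\alpha))$, which is exactly \eqref{eq:the-first-term}, already established via Lemmas \ref{lem:first-term}--\ref{lem:lemma2}. So the whole task is to evaluate the limit defining $\gamma_q$ in \eqref{eq:gamma-q}.

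I would first peel the exponential clock off the conditioning. Since $\lambda^A=\Gamma_+(\lambda)$ is a functional of $X$ alone while $\mathbf{e_q}$ is independent of $X$, and $\{\lambda^B\le\Gamma_-(\mathbf{e_q})\}=\{\lambda^B\le\mathbf{e_q}\}$ by Lemma \ref{lem:exponential}, one gets $\p^\alpha[\lambda^A>s\mid\lambda^B\le\Gamma_-(\mathbf{e_q})]=\E^\alpha[\mathbf{1}_{\{\lambda^A>s\}}e^{-q\lambda^B}]/\E^\alpha[e^{-q\lambda^B}]$, the denominator being $G^B_q(\alpha,\alpha)/G^B_0(\alpha,\alpha)$ by Lemma \ref{lem:lambdaB}.

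The core step is to recognize the conditional law of $\lambda^A$. The path analysis inside the proof of Lemma \ref{lem:first-term} shows that, with $u$ as in \eqref{eq:u}, $L^A(u)=L^B(\lambda^B)$ by \eqref{eq:LA=LB} and $\lambda^A=\inf\{t:L^A(t)>L^A(u)\}$ by \eqref{eq:L^A-new-characterization}; moreover $L^B(\lambda^B)=L^B(\infty)$ because $\lambda^B$ is the last visit of $X^B$ to $\alpha$. Writing $\rho^A(v):=\inf\{t:L^A(t,\alpha)>v\}$ (the inverse local time of $X^A$, c.f. the proof of Lemma \ref{lem:lemma2}), this reads $\lambda^A=\rho^A(V)$ with $V:=L^B(\infty)$; here $V$ is a functional of $X^B$ only, hence independent of $X^A$ by Remark \ref{rem:indep}, and $\{\lambda^A>s\}=\{L^A(s)\le V\}$. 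Since $\{V\ge v\}=\{\rho^B(v)<\infty\}$ up to a null set and, on that event, the strong Markov property of $X^B$ at $\rho^B(v)$ gives $\lambda^B=\rho^B(v)+\lambda^B\circ\theta(\rho^B(v))$ with the increment independent of $\rho^B(v)$,
\[
	\E^\alpha[\mathbf{1}_{\{V\ge v\}}e^{-q\lambda^B}]=\E^\alpha[e^{-q\rho^B(v)}]\,\E^\alpha[e^{-q\lambda^B}]=e^{-v/G^B_q(\alpha,\alpha)}\cdot\frac{G^B_q(\alpha,\alpha)}{G^B_0(\alpha,\alpha)},
\]
using $\E^\alpha[e^{-q\rho^B(v)}]=e^{-v/G^B_q(\alpha,\alpha)}$ (as in \eqref{eq:rate-LB}) and Lemma \ref{lem:lambdaB}. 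Dividing by the denominator shows that, conditionally on $\{\lambda^B\le\Gamma_-(\mathbf{e_q})\}$, $V$ is exponential of rate $1/G^B_q(\alpha,\alpha)$, still independent of $X^A$; thus $\lambda^A$ is conditionally the instant at which $X^A$ accumulates local time at $\alpha$ up to an independent exponential clock of rate $1/G^B_q(\alpha,\alpha)$---i.e. $\hat X^A$ is a copy of $X^A$ killed at $\alpha$ by the elastic mechanism with killing rate $1/G^B_q(\alpha,\alpha)$ per unit of local time---so that
\[
	\p^\alpha[\lambda^A>s\mid\lambda^B\le\Gamma_-(\mathbf{e_q})]=\E^\alpha[e^{-L^A(s)/G^B_q(\alpha,\alpha)}].
\]

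It then remains to read the rate off \eqref{eq:gamma-q}. Since the killing measure of $\hat X^A$ is carried by the single point $\alpha$, the pertinent infinitesimal killing rate is the one measured against the local time of $X^A$ at $\alpha$; with that clock the conditional survival of $\hat X^A$ past local-time level $s$ is $\p^\alpha[L^A(\lambda^A)>s\mid\lambda^B\le\Gamma_-(\mathbf{e_q})]=\p^\alpha[V>s\mid\lambda^B\le\Gamma_-(\mathbf{e_q})]=e^{-s/G^B_q(\alpha,\alpha)}$, whence $\gamma_q=\lim_{s\downarrow0}\frac{1}{s}\big(1-e^{-s/G^B_q(\alpha,\alpha)}\big)=1/G^B_q(\alpha,\alpha)$, and substituting this into \eqref{eq:the-first-term} gives \eqref{eq:alternative}. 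The points I expect to require the most care are the strong-Markov surgery at $\rho^B(v)$ that fixes the conditional law of $V$, and pinning down the precise sense of the limit in \eqref{eq:gamma-q} at the elastic boundary $\alpha$---which I would take, as is natural for a boundary carrying an atomic killing measure, against the local time of $X^A$ there.
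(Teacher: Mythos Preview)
Your argument is correct and follows the same conceptual skeleton as the paper: identify $\lambda^A=\rho^A(V)$ with threshold $V=L^B(\lambda^B)$, show that $V$ is conditionally exponential of rate $1/G^B_q(\alpha,\alpha)$ given $\{\lambda^B\le\Gamma_-(\mathbf{e_q})\}$, and read off $\gamma_q$. The execution differs in two places. To obtain the conditional law of $V$, the paper observes that under the conditioning one has $L^B(\lambda^B)=L^B(\Gamma_-(\mathbf{e_q}))$ (the local time not increasing past $\lambda^B$) and then appeals to Lemma~\ref{lem:exponential} and \eqref{eq:rate-LB} to reduce this to $L^B(\mathbf{e_q})$; you instead run a strong-Markov decomposition of $\lambda^B$ at $\rho^B(v)$, which is more self-contained but requires the care you flag. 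For \eqref{eq:alternative}, the paper derives it constructively via the integral $\int_0^\infty \E^\alpha[e^{-q\rho^A(s)}]\gamma_q e^{-\gamma_q s}\,\mathrm{d}s=\gamma_q G^A_q(\alpha,\alpha)/(1+\gamma_q G^A_q(\alpha,\alpha))$, whereas you obtain it by substituting $\gamma_q$ back into the already-proved \eqref{eq:the-first-term}; logically equivalent, though the paper's computation makes the elastic-killing structure of the formula explicit. Your closing caveat about the literal real-time limit in \eqref{eq:gamma-q} versus the local-time rate is well taken: the paper, like you, effectively identifies $\gamma_q$ with the parameter of the exponential clock $\tau$ in the It\^o--McKean elastic-killing construction.
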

\begin{remark}\normalfont
	When we plug the value of $\gamma_q=\frac{1}{G^B_q(\alpha, \alpha)}$ into \eqref{eq:alternative}, we retrieve  \eqref{eq:the-first-term}:
	\[
	\p^\alpha[\lambda^A\le \Gamma_+(\mathbf{e_q})\mid \lambda^B\le \Gamma_- (\mathbf{e_q})]=\frac{\gamma_q\cdot G^A_q(\alpha, \alpha)}{1+\gamma_q \cdot G^A_q(\alpha, \alpha)}=\frac{G_q^A(\alpha, \alpha)}{G^A_q(\alpha, \alpha)+G^B_q(\alpha, \alpha)}.
	\]
\end{remark}
\begin{proof}
Recall that $X^A$ is a diffusion on $[\alpha, r)$ and is reflecting at $\alpha$ with its killing measure being \emph{zero}. Recall also that the local time at $\alpha$ of $X^A$ is $L^A(t)$ and $\rho^A(s)=\inf\{t: L^A(t)>s\}$ is the inverse local time process. Referring to \citet[Section 5.6]{IM1974}, one could obtain a process identical in law to $\hat{X}^A$, conditioned on $\lambda^B\le \Gamma_- (\mathbf{\mathbf{e_q}})$, by killing the process $X^A$ in the following way: let $\tau$ be an independent exponential random variable with rate $\gamma_q$ and kill $X^A$ at the time $\inf\{t:L^A(t)\ge \tau\}$.
	That is, with the definition of $\gamma_q$ in \eqref{eq:gamma-q},
	\begin{equation*}\label{eq:interim3}
		\p^\alpha[\lambda^A>s\mid \lambda^B\le\Gamma_-(\mathbf{e_q})]=\p^\alpha(\tau>L^A(s)\mid \lambda^B\le\Gamma_-(\mathbf{e_q}))=\E^\alpha[e^{-\gamma_qL^A(s)}],
	\end{equation*}
	which is equivalent to saying that
	\begin{equation}\label{eq:A-killing1}
		\p^\alpha[L^A(\lambda^A)>s\mid \lambda^B\le\Gamma_-(\mathbf{e_q})]=\p^\alpha[\lambda^A>\rho^A(s)\mid \lambda^B\le\Gamma_-(\mathbf{e_q})]=\p^\alpha(\tau> s\mid \lambda^B\le\Gamma_-(\mathbf{e_q}))=e^{-\gamma_q s},
	\end{equation}
	where the first equality is due to $\rho^A(L^A(\lambda^A))=\sup\{t: L^A(t)=L^A(\lambda^A)\}=\lambda^A$.
	
	On the other hand, recall that we have $\lambda^A=\inf\{t: L^A(t)>L^A(u)\}=\inf\{t: L^A(t)>L^B(\lambda^B)\}$ in \eqref{eq:LA=LB} and \eqref{eq:L^A-new-characterization}, with or without the condition $\lambda^B\le \Gamma_- (\mathbf{\mathbf{e_q}})$. Thus, $X^A$ is killed at the time $\inf\{t: L^A(t)>L^B(\lambda^B)\}$. It follows that the role played by $\tau$ is identical to the role played by $L^B(\lambda^B)$ under the condition $\lambda^B\le \Gamma_- (\mathbf{\mathbf{e_q}})$.
	
	Take $L^B(\lambda^B)$. Under $\lambda^B\le \Gamma_- (\mathbf{\mathbf{e_q}})$, the local time $L^B$ of $X^B$ shall not increase after time $\lambda^B$ due to the occurrence of $\lambda^B$, so that $L^B(\lambda^B)=L^B(\Gamma_-(\mathbf{e_q}))$ and therefore, we have
	\begin{align}\label{eq:A-killing2}
		\p^\alpha[L^B(\lambda^B)>s\mid \lambda^B\le\Gamma_-(\mathbf{e_q})]
		&=\p^\alpha(L^B(\Gamma_-(\mathbf{e_q}))>s\mid \lambda^B\le\Gamma_-(\mathbf{e_q})) \nn \\
		&=\p^\alpha(L^B(\mathbf{e_q})>s)=\exp\left(-\frac{s}{G^B_q(\alpha, \alpha)}\right)
	\end{align}
	where the last two equalities are due to Lemma \ref{lem:exponential} with \eqref{eq:reduceB-eq} and \eqref{eq:rate-LB}, respectively.
	It follows from the comparison of \eqref{eq:A-killing1} and \eqref{eq:A-killing2}  that $\gamma_q=\frac{1}{G^B_q(\alpha, \alpha)}$.
	
	Finally, we derive \eqref{eq:alternative}.  We have shown above that, given $\lambda^B\le \Gamma_- (\mathbf{\mathbf{e_q}})$,  the way of killing $X^A$ using the exponential random variable $\tau$ with rate $\gamma_q=\frac{1}{G^B_q(\alpha, \alpha)}$ is identical to the way in which one kills $X^A$ as in \eqref{eq:Xhat}.  This fact is used in the second equality below:
	\begin{align*}\label{eq:Gamma+}
		&\p^\alpha[\lambda^A\le \Gamma_+(\mathbf{\mathbf{e_q}})\mid \lambda^B\le \Gamma_- (\mathbf{\mathbf{e_q}})]=\p^\alpha[\lambda^A\le \mathbf{e_q} \mid \lambda^B\le \mathbf{e_q}]=\E^\alpha\left[e^{-q\inf\{t:L^A(t)\ge \tau\}}\right] \nn\\
		&=\int_0^\infty\E^\alpha\left[e^{-q\inf\{t:L^A(t)\ge s\}}\right]\gamma_q e^{-\gamma_q s}\diff s=\int_0^\infty\E^\alpha\left[e^{-q\rho^A(s)}\right]\gamma_q e^{-\gamma_q s}\diff s \nn\\
		&=\int_0^\infty e^{-\frac{s}{G_q^A(\alpha,\alpha)}}\gamma_q e^{-\gamma_q s}\diff s=\frac{\gamma_q\cdot G_q^A(\alpha,\alpha)}{1+\gamma_q\cdot G_q^A(\alpha,\alpha)}.
	\end{align*}
	In the fourth equality, we used the fact that the jumps of the inverse local time process $\rho^A(s)$ occur countably many times, so that the value of the integral is not affected if $\inf\{t:L^A(t)\ge s\}$ is replaced by $\inf\{t:L^A(t)>s\}=\rho^A(s)$.
\end{proof}
\section{Implementation}
In this section, we keep Assumption \ref{assump-s} valid. For the boundary conditions at the reflecting point, we refer the reader to \citet[Chapter II, Sections 1.7, 1.10]{borodina-salminen}.

\subsection {General procedure} \label{sec: procedure}
We shall describe how we obtain the decomposition formula \eqref{eq:prop-laplace}.  First, we solve $\G f=qf$ with $\G$ in \eqref{eq:diff-operator} to find $\psi_{q}(\cdot)$ and $\phi_{q}(\cdot)$ and calculate the Wronskian as well as the scale function $s(\cdot)$ in \eqref{eq:scale-speed}. By \eqref{eq:green-q} we obtain the Green function $G_q(\cdot, \cdot)$.

The procedure to obtain $G^A_q(\cdot, \cdot)$ for $q>0$ is as follows.  Let us consider $X^A$ on $\I^A=[\alpha, r)$ and denote its increasing and decreasing solutions to $\G f=qf$ as $\psi_q^A$ and $\phi_q^A$, respectively.  Recall that we can work for Proposition \ref{prop:laplace-product} with $X^A$ (rather than $\hat{X}^A$)  which is reflected at $\alpha$.  At the reflecting boundary $\alpha$, the condition is $(\psi_q^A)^+(\alpha)=0$.   Let us set $\psi_q^A(x)=a_1\psi_q(x)+a_2\phi_q(x)$ with some constants $a_1,a_2$ depending on $\alpha$, which satisfy
\begin{align*}
\begin{pmatrix}
  \psi_q^+(\alpha) & \phi_q^+(\alpha)\\
  \psi_q(\alpha) & \phi_q(\alpha)
\end{pmatrix}
\begin{pmatrix}
  a_1 \\
  a_2
\end{pmatrix}
=
\begin{pmatrix}
  0 \\
  \psi_{q}^A(\alpha)
\end{pmatrix}
.
\end{align*}
By using the Wronskian $w_q$, the solutions are $a_1(\alpha)=\frac{-\phi^+_q(\alpha) \psi_{q}^A(\alpha)}{w_q}$ and $a_2(\alpha)=\frac{\psi^+_q(\alpha) \psi_{q}^A(\alpha)}{w_q}$.  Since these solutions are unique up to constant multiplication, we choose
\begin{equation}\label{eq:a1a2}
  a_1(\alpha)=\frac{-\phi^+_q(\alpha) }{w_q}>0 \conn a_2(\alpha)=\frac{\psi^+_q(\alpha) }{w_q}>0
\end{equation}
so that $\psi_q^A(x)=\frac{-\phi^+_q(\alpha) }{w_q}\psi_q(x)+\frac{\psi^+_q(\alpha) }{w_q}  \phi_q(x)$.  This function is indeed increasing on $\I^A$. See Appendix \ref{app:increase-decrease} for details.
There is no boundary condition at $\alpha$ for $\phi^A_q$  and we set $\phi^A_q(x)=\phi_q(x)$ in $\I^A$.
From these, we compute $w^A_q$ (the Wronskian for $X^A$) also depending on $\alpha$. That is
 $w^A_q(\alpha)=(\psi_q^A)^+(x)\phi^A_q(x) -\psi_q^A(x) (\phi^A_q)^+(x)$.
Plug \eqref{eq:a1a2} in this equation and rearrange the terms to obtain
\begin{align}\label{eq:wronskian-A}
w^A_q(\alpha)&=\left[\frac{-\phi^+_q(\alpha) }{w_q}\psi_q^+(x) + \frac{\psi^+_q(\alpha) }{w_q}\phi_q^+(x)\right]\phi_q(x)-\left[\frac{-\phi^+_q(\alpha) }{w_q}\psi_q(x)+\frac{\psi^+_q(\alpha) }{w_q}\phi_q(x)\right]\phi_q^+(x) \nn\\
&=\frac{-\phi^+_q(\alpha) }{w_q}[\psi_q^+(x)\phi_q(x)-\psi_q(x)\phi_q^+(x)]=-\phi^+_q(\alpha)>0
\end{align}
by using the definition of $w_q$  in the last equality.  It is now clear why $a_1(\alpha)$ and $a_2(\alpha)$ are so chosen in \eqref{eq:a1a2}. The Green function of $X^A$ is thereby written as
\begin{equation}\label{eq:Green-A}
  G_q^A(x, y)=\frac{1}{-\phi^+_q(\alpha)}\phi_q(x)\left[\frac{-\phi^+_q(\alpha) }{w_q}\psi_q(y)+\frac{\psi^+_q(\alpha) }{w_q}  \phi_q(y)\right], \quad x\ge y.
\end{equation}
For the case $x\le y$, $\psi_q^A$ should be evaluated at $x$ and $\phi_q^A$ should be evaluated at $y$. In particular, evaluating this Green function at $(\alpha, \alpha)$, we obtain
\begin{equation}\label{eq:Green-A-at-alpha}
  G_q^A(\alpha, \alpha)=\frac{\phi_q(\alpha)}{-\phi^+_q(\alpha)}
\end{equation}
since the numerator in the square bracket of  \eqref{eq:Green-A} is $w_q$, the Wronskian of the original $X$.

Let us move on to $G_q^B(\cdot, \cdot)$ for $q>0$. Consider $X^B$ on $\I^B=(\ell, \alpha]$ and   denote its increasing and decreasing solutions to $\G f=qf$ as $\psi_q^B$ and $\phi_q^B$, respectively.  The boundary condition at the reflecting boundary $\alpha$ is $(\phi_q^B)^-(\alpha)=0$. We set  $\phi_q^B(x)=b_1\psi_q(x)+b_2\phi_q(x)$ with some constants $b_1,b_2$ depending on $\alpha$.  By proceeding similarly to the case of $X^A$, we choose
\begin{equation}\label{eq:b1b2}
  b_1(\alpha)=\frac{-\phi^-_q(\alpha) }{w_q}>0  \conn b_2(\alpha)=\frac{\psi^-_q(\alpha) }{w_q}>0
\end{equation}
so that $\phi_q^B(x)=\frac{-\phi^-_q(\alpha) }{w_q}\psi_q(x)+\frac{\psi^-_q(\alpha) }{w_q}\phi_q(x)$. This function is decreasing on $\mathcal{I}^B$ as shown in Appendix \ref{app:increase-decrease}. There is no boundary condition at $\alpha$ for $\psi_q^B$  and we set $\psi_q^B(x)=\psi_q(x)$ in $\mathcal{I}^B$. Thus, the Wronskian for $X^B$ is calculated, similarly to \eqref{eq:wronskian-A}, as
\begin{align*}
w_q^B(\alpha) = \psi_q^-(\alpha)>0.
\end{align*}
The Green function of $X^B$ is thereby written as
\begin{equation}\label{eq:Green-B}
  G_q^B(x, y)=\frac{1}{\psi^-_q(\alpha)}\psi_q(y)\left[\frac{-\phi^-_q(\alpha) }{w_q}\psi_q(x)+\frac{\psi^-_q(\alpha) }{w_q}  \phi_q(x)\right], \quad x\ge y.
\end{equation}
For the case $x\le y$, $\psi_q^B$ should be evaluated at $x$ and $\phi_q^B$ should be evaluated at $y$.  In particular, evaluating this Green function at $(\alpha, \alpha)$, we obtain
\begin{equation}\label{eq:Green-B-at-alpha}
  G_q^B(\alpha, \alpha)=\frac{\psi_q(\alpha)}{\psi^-_q(\alpha)}
\end{equation}
since the numerator in the square bracket of  \eqref{eq:Green-B} is $w_q$, the Wronskian of the original $X$.

Next we calculate $G^B_0(\cdot, \cdot)$.  We proceed similarly to the case of $G^B_q$, solving $\G f=0$ to obtain $\psi_0$ and $\phi_0$. The boundary condition at the reflecting boundary $\alpha$ is $(\phi_0^B)^-(\alpha)=0$. We set  $\phi_0^B(x)=c_1\psi_0(x)+c_2\phi_0(x)$ with some constants $c_1,c_2$ depending on $\alpha$.  Similarly to \eqref{eq:b1b2}, we have $c_1(\alpha)=\frac{-\phi^-_0(\alpha) }{w_0}$ and $c_2(\alpha)=\frac{\psi^-_0(\alpha) }{w_0}$. This implies that $\phi_0^B$ is in fact constant on $\mathcal{I}^B$ as shown in Appendix \ref{app:increase-decrease}. There is no boundary condition at $\alpha$ for $\psi_0^B$  and we set $\psi_0^B(x)=\psi_0(x)$.  Then $w_0^B(\alpha)=(\psi_0^B)^-(x)\phi^B_0(x)-\psi_0^B(x) (\phi^B_0)^-(x)=\psi_0^-(\alpha)$.  The Green function $G_0^B(x, y)$ for $x\ge y$ becomes
\begin{align}\label{eq:case1-GB}
  G_0^B(x, y)=\frac{1}{\psi_0^-(\alpha)}\psi_0(y)\left[\frac{-\phi^-_0(\alpha) }{w_0}\psi_0(x)+\frac{\psi^-_0(\alpha) }{w_0}  \phi_0(x)\right].
\end{align}
But thanks  to \eqref{lemma:psi-phi}, we can set $\phi_0=1$ and simplify further to obtain $G^B_0(x, y)=\frac{\psi_0(y)}{w_0}$.  On the other hand, the Green function of the original $X$ with $q=0$ for $x\ge y$ is
$G_0(x, y)=\frac{\psi_0(y)}{w_0}$ by \eqref{eq:green-0}. The case $x\le y$ is similar, so that we have
\begin{align*}
  G^B_0(x, y)=G_0(x, y)=\begin{cases}
    \frac{\psi_0(y)}{w_0}, \quad x\ge y\\
    \frac{\psi_0(x)}{w_0}, \quad x\le y
  \end{cases}
  =(s(x)-s(\ell))\wedge (s(y)-s(\ell)).
\end{align*}  In particular,
\begin{equation}\label{eq:G=GB}
  G^B_0(\alpha, \alpha)=G_0(\alpha, \alpha)=\frac{\psi_0(\alpha)}{w_0}=s(\alpha)-s(\ell), \quad \alpha\in \I.
\end{equation}
The quantities $a_1$, $a_2$, $b_1$, $b_2$, $w_q^A$ and $w_q^B$ all depend on $\alpha$.  But since $\alpha$ is a pre-fixed state in $\I$, we consider them constant.  For this reason, we omit the argument of these quantities for the rest of this paper. Since we have established the general procedure, we shall take specific diffusions below. For the basic characteristics of each diffusion, refer to \citet[Appendix 1]{borodina-salminen}.

\subsection{\textbf{Example:}} \label{sec:example} \emph{Brownian motion with drift}.
In this example, we consider the last passage time of the level $\alpha=0$ for a Brownian motion with drift starting at $\alpha=0$. We decompose its Laplace transform using Proposition \ref{prop:laplace-product}. Let $X$ be a Brownian motion with drift $\mu<0$ and set $\nu=-\mu>0$. The state space is $\mathcal{I}=(-\infty,+\infty)$ and both boundaries are natural.
The scale function is
\[s(x)=\frac{1}{2\nu}(e^{2\nu x}-1).\] We see that $\lim_{y\downarrow-\infty}s(y)=-\frac{1}{2\nu}>-\infty$ and $\lim_{y\uparrow +\infty}s(y)=+\infty$, so that Assumption \ref{assump-s} holds. The generator is given by $\G f(x)=\frac{1}{2}f''(x)-\nu f'(x)$.
The linearly independent solutions to $\G f=qf$ are given by
\[
\psi_q(x)=e^{(\sqrt{\nu^2+2q}+\nu)x}\conn \phi_q(x)=e^{-(\sqrt{\nu^2+2q}-\nu)x}.\]
Moreover, from \eqref{eq:right-left-derivative}, we have  $\psi_q^+(x)=(\nu+\sqrt{\nu^2+2q})e^{(\sqrt{\nu^2+2q}-\nu)x}$ and $\phi_q^+(x)=(\nu-\sqrt{\nu^2+2q})e^{-(\sqrt{\nu^2+2q}+\nu)x}$. Hence by \eqref{eq:green-q}
\[w_q=2\sqrt{\nu^2+2q}\conn G_q(x,\alpha)=\frac{1}{2\sqrt{\nu^2+2q}}e^{-(\sqrt{\nu^2+2q}-\nu)x}\cdot e^{(\sqrt{\nu^2+2q}+\nu)\alpha}\] for $x\ge \alpha$.

Let us compute $G_0(\alpha, \alpha)$ in three ways, via \eqref{eq:G0}, \eqref{eq:green-0}, and \eqref{eq:G0-explicit}, to make sure that all  lead to the same result.  First, by \eqref{eq:G0}, $\lim_{q\downarrow 0}G_q(\alpha, \alpha)=\frac{1}{2\nu}e^{2\nu \alpha}$.  To use \eqref{eq:green-0}, we solve the ODE $\G f=0$ to obtain the fundamental solutions $\psi_0(x)=e^{2\nu x}$ and $\phi_0(x)=1$, so that $w_0=2\nu$. From these results, we have
$G_0(\alpha, \alpha)=\frac{1}{w_0}\psi_0(\alpha)=\frac{1}{2\nu}e^{2\nu \alpha}$.  Finally, by \eqref{eq:G0-explicit},
 $G_0(\alpha,\alpha)=s(\alpha)-\lim_{y\downarrow-\infty}s(y)=\frac{1}{2\nu}e^{2\nu \alpha}$, which confirms the claim.  Now, we obtain from \eqref{eq:lambda-laplace} by substituting $\alpha=0$
\begin{equation}\label{eq:lambda-laplace-bm}
	\E^x\left[e^{-q\lambda_0}\right]=\frac{\nu}{\sqrt{\nu^2+2q}}e^{-(\sqrt{\nu^2+2q}-\nu)x}, \quad x\ge 0.
\end{equation}

Let us consider $X^A$ on $[0,\infty)$.   For the purpose of identifying the decomposition, we can save a lot of computations by applying \eqref{eq:Green-A-at-alpha} to obtain $G_q^A(0,0)=\frac{1}{\sqrt{\nu^2+2q}-\nu}$ with $\alpha=0$.  Next, we move on to $X^B$ on $(-\infty, 0]$.  By \eqref{eq:Green-B-at-alpha}, $G_q^B(0,0)=\frac{1}{\sqrt{\nu^2+2q}+\nu}$. On the other hand, by \eqref{eq:G=GB} we have $G^B_0(\alpha,\alpha)=G_0(\alpha,\alpha)=\frac{1}{2\nu}e^{2\nu \alpha}$ and by substitution $G^B_0(0, 0)=\frac{1}{2\nu}$.

Now we substitute our results in the decomposition formula \eqref{eq:prop-laplace} to obtain
\[
\frac{G_q^A(0, 0)}{G_q^A(0, 0)+G_q^B(0, 0)}\cdot \frac{G_q^B(0, 0)}{G_0^{B}(0,0)}
=\frac{\sqrt{\nu^2+2q}+\nu}{2\sqrt{\nu^2+2q}}\cdot \frac{2\nu}{\sqrt{\nu^2+2q}+\nu}=\frac{\nu}{\sqrt{\nu^2+2q}}
\]
which matches \eqref{eq:lambda-laplace-bm} with $x=0$. We have confirmed Proposition \ref{prop:laplace-product}.

\section{Mathematical Applications: Decomposition of Green function}\label{sec:Green}%
In this section, we present some  mathematical applications of the decomposition formula \eqref{eq:prop-laplace}. For this purpose, first recall the processes $X^A$ on $\mathcal{I}^A=[\alpha,r)$ and $X^B$ on $\mathcal{I}^B=(\ell,\alpha]$.  Refer to Remark \ref{rem:three-processes} which says that, in particular, (i) both $X^A$ and $X^B$ have the same scale function and speed measure as $X$, (ii) they are reflecting at $\alpha$,  and (iii) not killed in the interior of $\mathcal{I}$, but only killed at the boundaries $r$ and $\ell$, respectively (because the original $X$ is so assumed). For the rest of the paper, $X^A$ and $X^B$ should be understood as processes on $\mathcal{I}^A$ and $\mathcal{I}^B$, respectively, satisfying (i)$\sim$(iii). Recall that superscripts $A$ and $B$ are used to denote quantities associated with these processes.
\newline\indent
If we assume, in place of Assumption \ref{assump-s}, $s(\ell)=-\infty, s(r)<+\infty$  which we refer to as Case 2 below, the construction of $X^A$ and $X^B$ (detailed in Section \ref{sec:time-changed process}) changes in an obvious way. However, the points (i)$\sim$(iii) above remain valid.  Furthermore, if we assume $s(\ell)>-\infty, s(r)<+\infty$, which is referred to as Case 3, we do not know whether $X$ is killed at $\ell$ or at $r$.

The analysis of $G^A_q(\cdot, \cdot)$ and $G^B_q(\cdot, \cdot)$ for $q>0$ does not change from the material in Section \ref{sec: procedure} in either Case 2 or Case 3.  However, we need to be careful for $q=0$.  The key observation in Section \ref{sec: procedure}  was $G^B_0(\alpha, \alpha)=G_0(\alpha, \alpha)$ in \eqref{eq:G=GB}.  The question is how we make adjustments of this fact in treating Cases 2 and 3. We shall show this in the next two remarks, respectively.  We could have used different notations of $\psi_0$, $\phi_0$, $G_0$ and $w_0$ according to the three cases.  However, we do not want to flood the exposition with notations where the reader can easily differentiate these quantities.

\begin{remark}\normalfont \label{rem:case2-G0}
$G_0(x, y)$ is represented in \eqref{eq:green-0}. For the first case $s(\ell)>-\infty, s(r)=+\infty$ (Assumption \ref{assump-s}), see the explanations preceding \eqref{eq:G0-explicit}.  In Case 2: $s(\ell)=-\infty, s(r)<+\infty$, we work similarly to the first case.  The increasing solution $\psi_0$ and decreasing solution $\phi_0$ of $\G f=0$ are uniquely determined based on the boundary conditions and satisfy
 \begin{equation*}
   \psi_0\equiv 1, \quad \phi_0(\ell+)=+\infty, \quad \phi_0(r-)=0.
 \end{equation*} Note that we can set $\psi_0=1$ rather than $\phi_0=1$.
 Then from the boundary conditions, we have
 \[
 \phi_0(x)=w_0(s(r)-s(x)), \quad x\in \I,
 \] which in turn leads to
 \begin{equation*}
   G_0(x, y) = (s(r)-s(x)) \wedge (s(r)-s(y))
 \end{equation*} since $\psi_0\equiv 1$. Note that $w_0$ is forced to be $-\phi_0^+$.

 We shall show $G_{0}^A(\alpha, \alpha)=G_0(\alpha, \alpha)$ in Case 2. For $X^A$, we denote the  increasing and decreasing solutions to $\G f=0$ as $\psi_0^A$ and $\phi_0^A$, respectively. Using the  condition at the reflecting boundary $\alpha$, we have $\psi_0^A(x)=a_1\psi_0(x)+a_2\phi_0(x)$ with $a_1=\frac{-\phi^+_0(\alpha) }{w_0}>0 \conn a_2=\frac{\psi^+_0(\alpha) }{w_0}>0$. There is no boundary condition at $\alpha$ for $\phi^A_0$ and we set $\phi_0^A(x)=\phi_0(x)$. Then $w_0^A=-\phi_0^+(\alpha)$.  The Green function $G_0^A(x, y)$ for $x\ge y$ becomes
 \begin{equation}\label{eq:case2-GA}
 G_0^A(x, y)=\frac{1}{-\phi^+_0(\alpha)}\phi_0(x)\left[\frac{-\phi^+_0(\alpha) }{w_0}\psi_0(y)+\frac{\psi^+_0(\alpha) }{w_0}  \phi_0(y)\right].
 \end{equation}  By setting $\psi_0=1$ and simplifying, we obtain $G_0^A(x, y)=\frac{\phi_0(x)}{w_0}$. On the other hand, the Green function of the original $X$ with $q=0$ for $x\ge y$ is
$G_0(x, y)=\frac{\phi_0(x)}{w_0}$ by \eqref{eq:green-0} when we set $\psi_0=1$. The case $x\le y$ is similar, so that we have
\begin{align}\label{eq:G=GA}
  G^A_0(x, y)=G_0(x, y)=\begin{cases}
    \frac{\phi_0(x)}{w_0}, \quad x\ge y\\
    \frac{\phi_0(y)}{w_0}, \quad x\le y
  \end{cases}
  =(s(r)-s(x))\wedge (s(r)-s(y)).
\end{align}  In particular, we obtain $G^A_0(\alpha, \alpha)=G_0(\alpha, \alpha)=\frac{\phi_0(\alpha)}{w_0}=s(r)-s(\alpha)$, $\alpha\in \I$.
\end{remark}

\begin{remark}\normalfont \label{rem:case3-G0}
In Case 3: $s(\ell)>-\infty, s(r)<+\infty$, we set the increasing solution $\psi_0$ and decreasing solution $\phi_0$ of $\G f=0$ as
\begin{equation}\label{eq:case3-phi-psi}
  \psi_0(y)=s(y)-s(\ell) \conn \phi_0(x)=s(r)-s(x), \quad x, y\in \I.
\end{equation} These functions satisfy the boundary condition $\psi_0(\ell_+)=\phi_0(r-)=0$.  Then we compute the constant $w_0$ and observe
\begin{equation}\label{eq:case3-w0}
  w_0=s(r)-s(\ell)>0 \conn \psi_0(x)+\phi_0(x)=w_0, \quad \forall x\in \I.
\end{equation}
Plugging these quantities into \eqref{eq:green-0}, we have
\begin{align*}
	G_0(x,y)=\begin{cases}
\frac{(s(y)-s(\ell))(s(r)-s(x))}{s(r)-s(\ell)}, \quad x\ge y, \\
		\frac{(s(x)-s(\ell))(s(r)-s(y))}{s(r)-s(\ell)}, \quad x\le y.
			\end{cases}
\end{align*}
It is easy to see that we can retrieve the form of $G_0(x, y)$ in Cases 1 and 2 by using their respective conditions at $\ell$ and $r$.

Let us consider processes $X^A$ and $X^B$.
Using the same method as in Remark \ref{rem:case2-G0}, it is easy to see that $G_0^A(x, y)$ for $x\ge y$ in this case becomes
\[G_0^A(x, y)=\frac{1}{-\phi^+_0(\alpha)}\phi_0(x)\left[\frac{-\phi^+_0(\alpha) }{w_0}\psi_0(y)+\frac{\psi^+_0(\alpha) }{w_0}  \phi_0(y)\right],\]
which is the same as \eqref{eq:case2-GA} except for $w_0=s(r)-s(\ell)$.  Noting that $\psi_0=1$  and $\phi_0^+(\alpha)=-1$, we simplify it with \eqref{eq:case3-w0} to find  $G_0^A(x, y)=\phi_0(x)$.  The case $x\le y$ is similar, so that we have
 \begin{align}\label{eq:case3-GA_0}
  G^A_0(x, y)=\begin{cases}
   \phi_0(x), \quad x\ge y \\
    \phi_0(y), \quad x\le y 
  \end{cases}
  =(s(r)-s(x))\wedge (s(r)-s(y)).
  \end{align}
On the other hand, following the procedure in Section \ref{sec: procedure}, $G_0^B(x, y)$ for $x\ge y$ is
\begin{align*}
  G_0^B(x, y)=\frac{1}{\psi_0^-(\alpha)}\psi_0(y)\left[\frac{-\phi^-_0(\alpha) }{w_0}\psi_0(x)+\frac{\psi^-_0(\alpha) }{w_0}  \phi_0(x)\right],
\end{align*}
which is the same as \eqref{eq:case1-GB} except for $w_0=s(r)-s(\ell)$. Again noting that $\psi_0^-(\alpha)=1$ and $\phi_0^-(\alpha)=-1$, we simplify it with \eqref{eq:case3-w0} to find  $G_0^B(x, y)=\psi_0(y)$.  The case $x\le y$ is similar, so that we have
\begin{align}\label{eq:case3-GB_0}
  G^B_0(x, y)=\begin{cases}
    \psi_0(y), \quad x\ge y\\
    \psi_0(x), \quad x\le y
  \end{cases}
  =(s(x)-s(\ell))\wedge (s(y)-s(\ell)).
\end{align}
Neither $G^A_0(x, y)$ nor $G^B_0(x, y)$ is equal to $G_0(x, y)$.  This differs from the other two cases.  However, we have the following observations: by comparison we write $G^A_0(x, y)=\frac{G_0(x, y)w_0}{\psi_0(y)}$ and $G^B_0(x, y)=\frac{G_0(x, y)w_0}{\phi_0(x)}$ for $x\ge y$.  Then by \eqref{eq:case3-w0}, we have
\begin{equation}\label{eq:GA+GB=G}
  \frac{1}{G^A_0(\alpha, \alpha)}+\frac{1}{G^B_0(\alpha, \alpha)}=\frac{1}{G_0(\alpha, \alpha)},
\end{equation} which is the key to prove the decomposition formula for Case 3 and can be viewed as a decomposition formula of the Green function when $q=0$.

\end{remark}

\begin{proposition}\label{prop:all}
	In all three cases of
	(1) $s(\ell)>-\infty, s(r)=+\infty$,
	(2) $s(\ell)=-\infty, s(r)<+\infty$, and
	(3) $s(\ell)>-\infty, s(r)<+\infty$, we have for $q>0$
	\begin{equation}\label{eq:all-last-time}
		\E^\alpha[e^{-q\lambda_\alpha}]=\frac{G_q^A(\alpha, \alpha)}{G^A_q(\alpha, \alpha)+G^B_q(\alpha, \alpha)}\cdot \frac{G^B_q(\alpha, \alpha)}{G_0(\alpha, \alpha)}
	\end{equation}
	where $G^A_\cdot(\cdot, \cdot)$ and $G^B_\cdot(\cdot, \cdot)$ are the Green functions of $X^A$  and $X^B$, respectively, and $G_0(\cdot, \cdot)$ is the Green function of the original $X$ defined in \eqref{eq:G0}: specifically,
\begin{align}\label{eq:G0-all}
  G_0(\alpha, \alpha)&=\lim_{c\downarrow \ell, d\uparrow r}\frac{(s(\alpha)-s(c))(s(d)-s(\alpha))}{s(d)-s(c)}=
  \begin{cases}
   s(\alpha)-s(\ell), \quad \text{Case 1},\\
   s(r)-s(\alpha), \quad \text{Case 2},\\
   \frac{(s(\alpha)-s(\ell))(s(r)-s(\alpha))}{s(r)-s(\ell)}, \quad\text{Case 3}.
  \end{cases}
\end{align}
\end{proposition}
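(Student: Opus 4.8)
The plan is to establish \eqref{eq:all-last-time} in the three cases at once, by combining two ingredients, and then to obtain \eqref{eq:G0-all} by a direct computation of the $\G$-harmonic pair at $q=0$. The first ingredient is that the representation $\E^\alpha[e^{-q\lambda_\alpha}]=G_q(\alpha,\alpha)/G_0(\alpha,\alpha)$ in \eqref{eq:lambda-laplace} uses nothing beyond the transience of $X$ (Salminen's distribution formula together with $G_q(\alpha,\alpha)=\int_0^\infty e^{-qt}p(t;\alpha,\alpha)\diff t$ and $G_0(\alpha,\alpha)<\infty$), so it is in force in each of Cases 1--3, every one of which makes at least one boundary attracting. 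The second ingredient is the Green-function identity
\[
\frac{1}{G_q(\alpha,\alpha)}=\frac{1}{G^A_q(\alpha,\alpha)}+\frac{1}{G^B_q(\alpha,\alpha)},\qquad q>0,
\]
with the right-hand side built from the reflecting diffusions $X^A$ and $X^B$; this will be shown to hold in all three cases.

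For the identity I would substitute $G^A_q(\alpha,\alpha)=\phi_q(\alpha)/(-\phi_q^{+}(\alpha))$ and $G^B_q(\alpha,\alpha)=\psi_q(\alpha)/\psi_q^{-}(\alpha)$ from \eqref{eq:Green-A-at-alpha}--\eqref{eq:Green-B-at-alpha} and $G_q(\alpha,\alpha)=\psi_q(\alpha)\phi_q(\alpha)/w_q$ from \eqref{eq:green-q}, so that
\[
\frac{1}{G^A_q(\alpha,\alpha)}+\frac{1}{G^B_q(\alpha,\alpha)}=\frac{\psi_q^{-}(\alpha)\phi_q(\alpha)-\psi_q(\alpha)\phi_q^{+}(\alpha)}{\psi_q(\alpha)\phi_q(\alpha)}=\frac{w_q}{\psi_q(\alpha)\phi_q(\alpha)}=\frac{1}{G_q(\alpha,\alpha)},
\]
where one uses $\psi_q^{-}=\psi_q^{+}$, $\phi_q^{-}=\phi_q^{+}$ in the smooth setting and the definition of the Wronskian in the middle equality. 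The decisive point is that this computation does not depend on the case: as noted just before Remark \ref{rem:case2-G0}, the construction of $\psi_q^{A},\phi_q^{A},\psi_q^{B},\phi_q^{B}$ and hence of $G^A_q,G^B_q$ in Section \ref{sec: procedure} is unchanged in Cases 2 and 3 for $q>0$. (Its $q=0$ counterpart, with the convention $1/\infty=0$, is $\tfrac{1}{G^A_0(\alpha,\alpha)}+\tfrac{1}{G^B_0(\alpha,\alpha)}=\tfrac{1}{G_0(\alpha,\alpha)}$, which reduces to \eqref{eq:G=GB} in Case 1, to its mirror in Case 2, and to \eqref{eq:GA+GB=G} in Case 3.) Putting the two ingredients together,
\[
\E^\alpha[e^{-q\lambda_\alpha}]=\frac{G_q(\alpha,\alpha)}{G_0(\alpha,\alpha)}=\frac{1}{G_0(\alpha,\alpha)}\cdot\frac{G^A_q(\alpha,\alpha)\,G^B_q(\alpha,\alpha)}{G^A_q(\alpha,\alpha)+G^B_q(\alpha,\alpha)}=\frac{G^A_q(\alpha,\alpha)}{G^A_q(\alpha,\alpha)+G^B_q(\alpha,\alpha)}\cdot\frac{G^B_q(\alpha,\alpha)}{G_0(\alpha,\alpha)},
\]
which is \eqref{eq:all-last-time}. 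For \eqref{eq:G0-all} I would insert into $G_0(\alpha,\alpha)=\psi_0(\alpha)\phi_0(\alpha)/w_0$ from \eqref{eq:green-0} the increasing/decreasing $\G$-harmonic pair normalised as in the respective case ($\phi_0\equiv1$, $\psi_0=w_0(s-s(\ell))$ for Case 1; $\psi_0\equiv1$, $\phi_0=w_0(s(r)-s)$ for Case 2, cf.\ Remark \ref{rem:case2-G0}; $\psi_0=s-s(\ell)$, $\phi_0=s(r)-s$, $w_0=s(r)-s(\ell)$ for Case 3, cf.\ Remark \ref{rem:case3-G0}); equivalently, $G_0(\alpha,\alpha)=\lim_{c\downarrow\ell,\,d\uparrow r}(s(\alpha)-s(c))(s(d)-s(\alpha))/(s(d)-s(c))$, which specialises to the three displayed values.

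There is also a purely probabilistic route, closer to the argument behind Proposition \ref{prop:laplace-product}, which I record because it shows where the real work would lie. Case 1 is Proposition \ref{prop:laplace-product} combined with $G^B_0(\alpha,\alpha)=G_0(\alpha,\alpha)$ from \eqref{eq:G=GB}. Case 2 is the mirror-image version (swap $A\leftrightarrow B$, $\ell\leftrightarrow r$, increasing $\leftrightarrow$ decreasing), in which $\lambda^B$ now plays the role of a killing time and $\lambda^A$ that of a last passage time of $X^A$ to $\alpha$; it produces the product of $G^B_q(\alpha,\alpha)/(G^A_q(\alpha,\alpha)+G^B_q(\alpha,\alpha))$ and $G^A_q(\alpha,\alpha)/G^A_0(\alpha,\alpha)$, an expression symmetric in $A$ and $B$ that becomes \eqref{eq:all-last-time} once $G^A_0(\alpha,\alpha)=G_0(\alpha,\alpha)$ from Remark \ref{rem:case2-G0} is used. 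Case 3 would be handled by conditioning on whether $X$ is absorbed at $\ell$ or at $r$; on those events $X$ is the Doob $h$-transform by $h_\ell(x)=(s(r)-s(x))/(s(r)-s(\alpha))$, respectively $h_r(x)=(s(x)-s(\ell))/(s(\alpha)-s(\ell))$, and one checks that $X^{h_\ell}$ falls under Case 1 and $X^{h_r}$ under Case 2. Since $h_\ell(\alpha)=h_r(\alpha)=1$, the transform leaves every Green function at $(\alpha,\alpha)$ unchanged and $G^{h,B}_0(\alpha,\alpha)=G^h_0(\alpha,\alpha)=G_0(\alpha,\alpha)$ by \eqref{eq:G=GB} applied to the transformed Case-1 process, so each conditional Laplace transform already equals the right-hand side of \eqref{eq:all-last-time}, and hence so does its mixture over $\{X\to\ell\}$ and $\{X\to r\}$. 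The main obstacle along this route — the step I would write out in full — is Case 3: one must justify that the above/below-$\alpha$ component of the conditioned process coincides with the $h$-transform of the corresponding component of $X$ (they carry the same scale function, speed measure, reflecting point $\alpha$ and absorbing boundary, hence agree, but this needs an argument), together with the companion identity $G^{h,\bullet}_q(x,y)=G^\bullet_q(x,y)/(h(x)h(y))$. The analytic route of the previous two paragraphs avoids both of these points.
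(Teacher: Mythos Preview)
Your analytic Route 1 is correct and is genuinely simpler than the paper's argument. The paper handles Case~3 by conditioning on the exit boundary via Doob's $h$-transform with $\phi_0$ and $\psi_0$: it computes $(G_q^*)^A(\alpha,\alpha)$, $(G_q^*)^B(\alpha,\alpha)$, $(\tilde G_q)^A(\alpha,\alpha)$, $(\tilde G_q)^B(\alpha,\alpha)$ directly from \eqref{eq:Green-A-at-alpha}--\eqref{eq:Green-B-at-alpha} applied to the transformed processes, shows that each conditional term \emph{multiplied by its probability} simplifies to $\dfrac{G_q^A G_q^B}{G_q^A+G_q^B}\cdot\dfrac{1}{G_0^B(\alpha,\alpha)}$ (respectively $\cdot\,1/G_0^A(\alpha,\alpha)$), and then closes the sum with \eqref{eq:GA+GB=G}. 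Your identity $1/G_q(\alpha,\alpha)=1/G_q^A(\alpha,\alpha)+1/G_q^B(\alpha,\alpha)$, obtained in one line from \eqref{eq:Green-A-at-alpha}, \eqref{eq:Green-B-at-alpha} and the Wronskian, combined with \eqref{eq:lambda-laplace}, collapses all three cases simultaneously and in fact anticipates the $x=\alpha$ instance of \eqref{eq:x-alpha-Green-decomp}, which the paper only derives after Proposition~\ref{prop:all}. What the paper's route buys is that it stays within the probabilistic machinery of Proposition~\ref{prop:laplace-product} and never invokes \eqref{eq:lambda-laplace} directly for Cases~2--3; what your route buys is brevity and a case-free argument.

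One correction to your Route~2 sketch: the companion identity $G_q^{h,\bullet}(x,y)=G_q^\bullet(x,y)/(h(x)h(y))$ for the reflected Green functions is not merely unproven but false. From the paper's own computation (with $h=\phi_0$), $(G_q^*)^A(\alpha,\alpha)=G_q^A(\alpha,\alpha)/\bigl(\phi_0^2(\alpha)-\phi_0(\alpha)G_q^A(\alpha,\alpha)\bigr)$, which is not $G_q^A(\alpha,\alpha)/\phi_0^2(\alpha)$: the reflecting boundary condition $(\psi_q^{*,A})^{+}(\alpha)=0$ is taken with respect to the \emph{new} scale $s^*=1/\phi_0$, and this introduces an extra term. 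So ``the transform leaves every Green function at $(\alpha,\alpha)$ unchanged'' fails for $G_q^A$ and $G_q^B$ even when $h(\alpha)=1$. Your conclusion that each conditional Laplace transform equals the right-hand side of \eqref{eq:all-last-time} happens to be correct, but the clean justification is to apply \eqref{eq:lambda-laplace} to $X^{h_\ell}$ directly, giving $G_q^{h_\ell}(\alpha,\alpha)/G_0^{h_\ell}(\alpha,\alpha)=G_q(\alpha,\alpha)/G_0(\alpha,\alpha)$ since $h_\ell(\alpha)=1$; no reflected $h$-Green functions are needed.
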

\begin{proof}
	\textbf{Case 1}: $s(\ell)>-\infty, s(r)=+\infty$.
	This is the case of Assumption \ref{assump-s}.  We just note that $G_0(\alpha, \alpha)=G^B_0(\alpha, \alpha)=s(\alpha)-s(\ell)$ for $\alpha\in \I$  as in \eqref{eq:G=GB} to see that  \eqref{eq:all-last-time} is identical to \eqref{eq:prop-laplace} in Proposition \ref{prop:laplace-product}.

	\noindent \textbf{Case 2}: $s(\ell)=-\infty, s(r)<+\infty$. The  same proof  as for Proposition \ref{prop:laplace-product} leads to the decomposition formula as \eqref{eq:prop-laplace} with the roles of $G_0^A(\alpha, \alpha)$ and $G_0^B(\alpha, \alpha)$ interchanged. We obtain \eqref{eq:all-last-time} by noting that $G_0(\alpha, \alpha)=G^A_0(\alpha, \alpha)=s(r)-s(\alpha)$ for $\alpha\in \I$  as in \eqref{eq:G=GA} in Remark \ref{rem:case2-G0}.

\noindent \textbf{Case 3}: $s(\ell)>-\infty, s(r)<+\infty$. In this case, both events $\{\lim_{t\to\xi}X_t=\ell\}$ and $\{\lim_{t\to\xi}X_t=r\}$ occur with positive probability because both boundaries are attracting. We have
\begin{equation}\label{eq:hitting-prob}
	\p^\alpha(\lim_{t\to\xi}X_t=\ell)=1-\p^\alpha(\lim_{t\to\xi}X_t=r)=\frac{s(r)-s(\alpha)}{s(r)-s(\ell)}
\end{equation}
as shown in Proposition 5.22 in \citet[Chapter 5]{karatzas}. Note that
\begin{align}\label{eq:laplace-cond}
	\E^\alpha[e^{-q\lambda_\alpha }]
	=\E^\alpha[e^{-q\lambda_\alpha}\mid \lim_{t\to\xi}X_t=\ell ]\cdot \p^\alpha(\lim_{t\to\xi}X_t=\ell)+\E^\alpha[e^{-q\lambda_\alpha}\mid \lim_{t\to\xi}X_t=r]\cdot \p^\alpha(\lim_{t\to\xi}X_t=r).
\end{align}
We use Doob's $h$-transform to proceed with the proof. Refer to Appendix \ref{app:elements} for details regarding $h$-transform. The idea is to force the process $X$ to the left (resp. right) boundary so that we can use the results of Case 1 (resp. Case 2). It is well known that the $h$-transform of $X$ with the excessive function $\phi_0$ (resp. $\psi_0$) in \eqref{eq:case3-phi-psi} is identical in law to the original $X$ conditioned on $\{\lim_{t\to \xi}X_t=\ell\}$ (resp. $\{\lim_{t\to \xi}X_t=r\}$). See the references mentioned in Appendix \ref{app:elements}.

Let us first consider the $h$-transform of $X$ with $\phi_0(x)=s(r)-s(x)$. We denote this transformed diffusion as $X^*$ and use $*$ to indicate quantities associated with it. The scale function becomes $s^*(x)=\frac{1}{\phi_0(x)}$ and we see that $X^*$ satisfies Assumption \ref{assump-s}. The Green function of $X^*$ is given by
\begin{equation*}
	G_q^*(x,y)=\frac{G_q(x,y)}{\phi_0(x)\phi_0(y)}, \quad x,y\in\mathcal{I}
\end{equation*}
from which we see that $\psi_q^*=\frac{\psi_q}{\phi_0}$, $\phi_q^*=\frac{\phi_q}{\phi_0}$, and $w_q^*=w_q$ by direct computation. We can also check that $\psi_q^*$  is increasing and $\phi_q^*$ is decreasing. See Appendix \ref{app:case3-analysis} for a proof. Note that for $X^*$, the right and left derivatives with respect to the scale function should be calculated based on $s^*$.

Let us consider $(X^*)^A$ on $[\alpha, r)$ and $(X^*)^B$ on $(\ell, \alpha]$ which have the same scale function and speed measure as $X^*$ and are reflecting at $\alpha$. Recall that killing occurs only at boundaries $\ell$ and $r$. Using \eqref{eq:Green-A-at-alpha} and \eqref{eq:Green-B-at-alpha}, we have
\begin{equation*}
	(G_q^*)^A(\alpha,\alpha)=\frac{\phi_q^*(\alpha)}{-(\phi_q^*)^+(\alpha)}=\frac{G_q^A(\alpha,\alpha)}{\phi_0^2(\alpha)-\phi_0(\alpha)G_q^A(\alpha,\alpha)},
\end{equation*}

\begin{equation*}
	(G_q^*)^B(\alpha,\alpha)=\frac{\psi_q^*(\alpha)}{(\psi_q^*)^-(\alpha)}=\frac{G_q^B(\alpha,\alpha)}{\phi_0^2(\alpha)+\phi_0(\alpha)G_q^B(\alpha,\alpha)}.
\end{equation*}
Furthermore, due to \eqref{eq:case3-phi-psi} and \eqref{eq:case3-w0}, \eqref{eq:G=GB} results in
\begin{equation*}
	(G_0^*)^B(\alpha,\alpha)=s^*(\alpha)-s^*(\ell)=\frac{1}{\phi_0(\alpha)}-\frac{1}{\phi_0(\ell)}=\frac{\psi_0(\alpha)}{\phi_0(\alpha)w_0}.
\end{equation*}
Then, from the decomposition formula \eqref{eq:prop-laplace}, \eqref{eq:case3-GB_0}, and \eqref{eq:hitting-prob}, we obtain
\begin{align*}
	\E^\alpha[e^{-q\lambda_\alpha}\mid \lim_{t\to\xi}X_t=\ell]\cdot \p^\alpha(\lim_{t\to\xi}X_t=\ell)&=\frac{(G_q^*)^A(\alpha, \alpha)}{(G^*_q)^A(\alpha, \alpha)+(G^*_q)^B(\alpha, \alpha)}\cdot \frac{(G^*_q)^B(\alpha, \alpha)}{(G^*_0)^B(\alpha, \alpha)}\cdot \frac{\phi_0(\alpha)}{w_0}\\
	&=\frac{G_q^A(\alpha,\alpha)\cdot G_q^B(\alpha,\alpha)}{G_q^A(\alpha,\alpha)+G_q^B(\alpha,\alpha)}\cdot \frac{1}{G_0^B(\alpha,\alpha)}.
\end{align*}

Next, we consider the $h$-transform of $X$ with $\psi_0(x)=s(x)-s(\ell)$. We denote this transformed diffusion as $\tilde{X}$ and use $\sim$ to indicate quantities associated with it. The scale function becomes $\tilde{s}(x)=-\frac{1}{\psi_0(x)}$ and we see that $\tilde{X}$ belongs to Case 2. The Green function is given by
\begin{equation*}
	\tilde{G}_q(x,y)=\frac{G_q(x,y)}{\psi_0(x)\psi_0(y)}, \quad x,y\in\mathcal{I}
\end{equation*}
from which we obtain $\tilde{\psi}_q=\frac{\psi_q}{\psi_0}$, $\tilde{\phi}_q=\frac{\phi_q}{\psi_0}$, and $\tilde{w}_q=w_q$ by direct computation. We can also check that $\tilde{\psi}_q$  is increasing and $\tilde{\phi}_q$ is decreasing. See Appendix \ref{app:case3-analysis} for a proof.

Let us consider $\tilde{X}^A$ on $[\alpha, r)$ and $\tilde{X}^B$ on $(\ell, \alpha]$ which have the same scale function and speed measure as $\tilde{X}$ and are reflecting at $\alpha$, killing occurring only at $\ell$ and $r$. Using \eqref{eq:Green-A-at-alpha} and \eqref{eq:Green-B-at-alpha}, we have
\begin{equation*}
	\tilde{G}_q^A(\alpha,\alpha)=\frac{\tilde{\phi}_q(\alpha)}{-\tilde{\phi}_q^+(\alpha)}=\frac{G_q^A(\alpha,\alpha)}{\psi_0^2(\alpha)+\psi_0(\alpha)G_q^A(\alpha,\alpha)},
\end{equation*}
\begin{equation*}
	\tilde{G}_q^B(\alpha,\alpha)=\frac{\tilde{\psi}_q(\alpha)}{\tilde{\psi}_q^-(\alpha)}=\frac{G_q^B(\alpha,\alpha)}{\psi_0^2(\alpha)-\psi_0(\alpha)G_q^B(\alpha,\alpha)}.
\end{equation*}
Furthermore, due to \eqref{eq:case3-phi-psi} and \eqref{eq:case3-w0}, \eqref{eq:G=GA} results in
\begin{equation*}
	\tilde{G}_0^A(\alpha,\alpha)=\tilde{s}(r)-\tilde{s}(\alpha)=-\frac{1}{\psi_0(r)}+\frac{1}{\psi_0(\alpha)}=\frac{\phi_0(\alpha)}{\psi_0(\alpha)w_0}.
\end{equation*}
Then, the decomposition formula for Case 2, \eqref{eq:case3-GA_0}, and \eqref{eq:hitting-prob} provide
\begin{align*}
	\E^\alpha[e^{-q\lambda_\alpha}\mid \lim_{t\to\xi}X_t=r]\cdot \p^\alpha(\lim_{t\to\xi}X_t=r)&=\frac{\tilde{G}_q^A(\alpha, \alpha)}{\tilde{G}_q^A(\alpha, \alpha)+\tilde{G}_q^B(\alpha, \alpha)}\cdot \frac{\tilde{G}_q^B(\alpha, \alpha)}{\tilde{G}_0^A(\alpha, \alpha)}\cdot \frac{\psi_0(\alpha)}{w_0}\\
	&=\frac{G_q^A(\alpha,\alpha)\cdot G_q^B(\alpha,\alpha)}{G_q^A(\alpha,\alpha)+G_q^B(\alpha,\alpha)}\cdot \frac{1}{G_0^A(\alpha,\alpha)}.
\end{align*}
We have derived the expressions for both elements in \eqref{eq:laplace-cond}. Finally, \eqref{eq:all-last-time} holds due to \eqref{eq:GA+GB=G}.
\end{proof}

\renewcommand{\arraystretch}{1.25}
\begin{table}[h]
\centering
\caption{Summary}
\begin{tabular}{|c |c|c|c|}
  \hline
 
 &   Case 1 & Case 2 & Case 3 \\
 \hline
    $w_0$ (constant) & $\psi_0^-$ & $-\phi_0^+$ & $s(r)-s(\ell)$ \\
  $\psi_0(x)$ & $w_0(s(x)-s(\ell))$ & $1$ & $s(x)-s(\ell)$ \\
  $\phi_0(x)$ & 1 & $w_0(s(r)-s(x))$ & $s(r)-s(x)$ \\
  $G_0(x, y), \; x\ge y$ & $s(y)-s(\ell)$ & $s(r)-s(x)$ & $\frac{(s(y)-s(\ell))(s(r)-s(x))}{s(r)-s(\ell)}$ \\
  \hdashline
  $G_q^A(\alpha, \alpha)$ &$\frac{\phi_q(\alpha)}{-\phi_q^+(\alpha)}$  & $\frac{\phi_q(\alpha)}{-\phi_q^+(\alpha)}$ & $\frac{\phi_q(\alpha)}{-\phi_q^+(\alpha)}$  \\
  $G_q^B(\alpha, \alpha)$ & $\frac{\psi_q(\alpha)}{\psi_q^-(\alpha)}$  & $\frac{\psi_q(\alpha)}{\psi_q^-(\alpha)}$ & $\frac{\psi_q(\alpha)}{\psi_q^-(\alpha)}$ \\
  $G^A_0(x, y), \; x\ge y$ & $+\infty$ & $s(r)-s(x)$ & $s(r)-s(x)$ \\
  $G^B_0(x, y), \; x\ge y$ & $s(y)-s(\ell)$ & $+\infty$ & $s(y)-s(\ell)$\\
  \hline
\end{tabular}
\label{summary}
\end{table}

\renewcommand{\arraystretch}{1}

In Table \ref{summary}, we collect some results from Sections \ref{sec:setup} and \ref{sec: procedure} and  Remarks \ref{rem:case2-G0} and \ref{rem:case3-G0}. The first step is to obtain $\psi_0$ and $\phi_0$ from the solutions of $\G f =0$ as well as the scale function $s$. Then, the relationship of $\psi_0$, $\phi_0$, $w_0$, $s$ and $G_0$ is as described in the upper half of Table \ref{summary}. To obtain the decomposition, after computing $\psi_q$ and $\phi_q$ from $\G f=qf$, refer to the bottom half. Note that for rows 4, 7, and 8, $s(x)$ should be replaced by $s(y)$ (and vice versa) in case $x\le y$: except for $G^A_0(x, y)=+\infty$ in Case 1 and $G^B_0(x, y)=+\infty$ in Case 2, irrespective of the order of $x$ and $y$.

\bigskip
 
Let us now extend in another direction by starting $X$ at $x\neq \alpha$.  By using the shift operator,
\[
\lambda_\alpha= (H_\alpha + \lambda_\alpha\circ \theta_{H_\alpha})\cdot \1_{\{H_\alpha<+\infty\}}.
\]  Recall that $H_\alpha=+\infty$ implies $\lambda_\alpha=0$ and vice versa. Hence by the strong Markov property at $H_\alpha$,
\begin{align}\label{eq:x-alpha}
	\E^x[e^{-q\lambda_\alpha}]&=1\cdot \p^x(H_\alpha=+\infty)+\E^x[e^{-q(H_\alpha+\lambda_\alpha\circ \theta_{H_\alpha})}\cdot \1_{\{H_\alpha<+\infty\}}]\nn\\
	&=\p^x(H_\alpha=+\infty)+\E^x[\1_{\{H_\alpha<+\infty\}}\cdot e^{-q H_\alpha}\cdot \E^x[e^{-q\lambda_\alpha\circ \theta_{H_\alpha}}\mid \F_{H_\alpha}]]\nn\\
	&=\p^x(H_\alpha=+\infty)+\E^x[\1_{\{H_\alpha<+\infty\}}\cdot e^{-q H_\alpha}\cdot \E^\alpha[e^{-q\lambda_\alpha}]]\nn\\
	&=\p^x(H_\alpha=+\infty)+\E^x[e^{-qH_\alpha}]\cdot \E^\alpha[e^{-q\lambda_\alpha}], \qquad x\in\mathcal{I}.
\end{align}

\begin{theorem}\label{theorem}
	In all three cases of
	(1) $s(\ell)>-\infty, s(r)=+\infty$,
	(2) $s(\ell)=-\infty, s(r)<+\infty$, and
	(3) $s(\ell)>-\infty, s(r)<+\infty$, we have for any $x, \alpha\in \mathcal{I}$ and $q>0$
	\begin{align}\label{eq:x-alpha-all-last-time}
		\E^x[e^{-q\lambda_\alpha}]=
		\begin{cases}
		\lim\limits_{d\uparrow r} \frac{s(x)-s(\alpha)}{s(d)-s(\alpha)}+\frac{G_q^A(x,\alpha)} {G^A_q(\alpha, \alpha)+G^B_q(\alpha, \alpha)}\cdot \frac{G^B_q(\alpha, \alpha)}{G_0(\alpha, \alpha)}, \quad x\ge  \alpha,\\	
\lim\limits_{c\downarrow \ell}\frac{s(\alpha)-s(x)}{s(\alpha)-s(c)}+\frac{G_q^B(x,\alpha)}{G^A_q(\alpha, \alpha)+G^B_q(\alpha, \alpha)}\cdot \frac{G_q^A(\alpha, \alpha)}{G_0(\alpha, \alpha)}, \quad x\le \alpha,
		\end{cases}
	\end{align}
	where $G^A_\cdot(\cdot, \cdot)$ and $G^B_\cdot(\cdot, \cdot)$ are the Green functions of $X^A$  and $X^B$, respectively, and $G_0(\cdot, \cdot)$ is the Green function of the original $X$ defined in \eqref{eq:G0} and described in \eqref{eq:G0-all}.
	
	In all three cases, for any $x, \alpha\in \mathcal{I}$ and $q>0$,
	\begin{align}\label{eq:x-alpha-Green-decomp}
		G_q(x, \alpha)=
		\begin{cases}
\frac{G_q^A(x, \alpha)\cdot G^B_q(\alpha, \alpha)}{G^A_q(\alpha, \alpha)+G^B_q(\alpha, \alpha)}, \quad x\ge  \alpha,\\
\frac{G_q^A(\alpha, \alpha)\cdot G^B_q(x, \alpha)}{G^A_q(\alpha, \alpha)+G^B_q(\alpha, \alpha)}, \quad x\le \alpha.
					\end{cases}
	\end{align}
\end{theorem}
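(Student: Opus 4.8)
The plan is to build the statement on top of the already-established formula \eqref{eq:x-alpha}, which reduces everything to knowing $\E^\alpha[e^{-q\lambda_\alpha}]$ (supplied by Proposition \ref{prop:all}) together with the quantities $\p^x(H_\alpha=+\infty)$ and $\E^x[e^{-qH_\alpha}]$. First I would treat the case $x\ge\alpha$. The escape probability $\p^x(H_\alpha=+\infty)$ is computed by the standard scale-function argument: for $\alpha<x<d<r$ one has $\p^x(H_d<H_\alpha)=\frac{s(x)-s(\alpha)}{s(d)-s(\alpha)}$, and letting $d\uparrow r$ gives $\p^x(H_\alpha=+\infty)=\lim_{d\uparrow r}\frac{s(x)-s(\alpha)}{s(d)-s(\alpha)}$ (this is $0$ in Case 1 where $s(r)=+\infty$, and positive in Cases 2 and 3). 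For the hitting-time Laplace transform I would use \eqref{eq:hitting-time-laplace}, namely $\E^x[e^{-qH_\alpha}]=\phi_q(x)/\phi_q(\alpha)$ for $x\ge\alpha$. Substituting these and the expression for $\E^\alpha[e^{-q\lambda_\alpha}]$ from \eqref{eq:all-last-time} into \eqref{eq:x-alpha} yields
\[
\E^x[e^{-q\lambda_\alpha}]=\lim_{d\uparrow r}\frac{s(x)-s(\alpha)}{s(d)-s(\alpha)}+\frac{\phi_q(x)}{\phi_q(\alpha)}\cdot\frac{G_q^A(\alpha,\alpha)}{G_q^A(\alpha,\alpha)+G_q^B(\alpha,\alpha)}\cdot\frac{G_q^B(\alpha,\alpha)}{G_0(\alpha,\alpha)} .
\]
The key algebraic identification is then that $\frac{\phi_q(x)}{\phi_q(\alpha)}\,G_q^A(\alpha,\alpha)=G_q^A(x,\alpha)$ for $x\ge\alpha$: indeed by \eqref{eq:Green-A} and \eqref{eq:Green-A-at-alpha}, $G_q^A(x,\alpha)=\phi_q^A(x)\psi_q^A(\alpha)/w_q^A=\phi_q(x)\cdot\bigl(\phi_q(\alpha)/(-\phi_q^+(\alpha))\bigr)/\phi_q(\alpha)\cdot(-\phi_q^+(\alpha))\cdots$ — more cleanly, since $\phi_q^A=\phi_q$ on $\I^A$ and $G_q^A(\alpha,\alpha)=\phi_q(\alpha)/(-\phi_q^+(\alpha))$, one has $G_q^A(x,\alpha)/G_q^A(\alpha,\alpha)=\phi_q(x)/\phi_q(\alpha)$. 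This gives the first line of \eqref{eq:x-alpha-all-last-time}.

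For $x\le\alpha$ the argument is the mirror image, using the decreasing boundary $\ell$. Here $\p^x(H_\alpha=+\infty)=\lim_{c\downarrow\ell}\frac{s(\alpha)-s(x)}{s(\alpha)-s(c)}$ (zero in Case 2, positive in Cases 1 and 3), and $\E^x[e^{-qH_\alpha}]=\psi_q(x)/\psi_q(\alpha)$ by \eqref{eq:hitting-time-laplace}. The analogue of the key identity is $\frac{\psi_q(x)}{\psi_q(\alpha)}\,G_q^B(\alpha,\alpha)=G_q^B(x,\alpha)$ for $x\le\alpha$, which follows from $\psi_q^B=\psi_q$ on $\I^B$ and \eqref{eq:Green-B-at-alpha} exactly as above; combining with \eqref{eq:x-alpha} and \eqref{eq:all-last-time} (where for $x\le\alpha$ it is $G_q^A(\alpha,\alpha)$ that multiplies the hitting-time factor) gives the second line of \eqref{eq:x-alpha-all-last-time}.

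Finally, for the Green-function decomposition \eqref{eq:x-alpha-Green-decomp} I would recall \eqref{eq:lambda-laplace}, which in the generalized form of Proposition \ref{prop:all} reads $\E^x[e^{-q\lambda_\alpha}]=G_q(x,\alpha)/G_0(\alpha,\alpha)$ for $x\ge\alpha$ — but note this holds only when $\lambda_\alpha>0$ a.s., i.e. when the escape-probability term vanishes. The cleaner route, valid in all three cases, is to observe that the "hitting-time + last-passage-from-$\alpha$" part of \eqref{eq:x-alpha-all-last-time} is precisely $G_q(x,\alpha)/G_0(\alpha,\alpha)$: for $x\ge\alpha$, using $G_q(x,\alpha)=\psi_q(\alpha)\phi_q(x)/w_q$ and $G_q^A(x,\alpha)=\phi_q(x)\cdot\phi_q(\alpha)/(-\phi_q^+(\alpha))\cdot(-\phi_q^+(\alpha))/(\cdots)$, one checks directly that $\frac{G_q^A(x,\alpha)G_q^B(\alpha,\alpha)}{G_q^A(\alpha,\alpha)+G_q^B(\alpha,\alpha)}=\frac{\phi_q(x)}{\phi_q(\alpha)}\cdot\frac{G_q^A(\alpha,\alpha)G_q^B(\alpha,\alpha)}{G_q^A(\alpha,\alpha)+G_q^B(\alpha,\alpha)}$, and by \eqref{eq:Green-A-at-alpha}--\eqref{eq:Green-B-at-alpha} the last factor equals $\frac{\phi_q(\alpha)\psi_q(\alpha)}{\psi_q^-(\alpha)\phi_q(\alpha)-\psi_q(\alpha)\phi_q^+(\alpha)}=\psi_q(\alpha)\phi_q(\alpha)/w_q$, so that the whole expression collapses to $\psi_q(\alpha)\phi_q(x)/w_q=G_q(x,\alpha)$. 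The case $x\le\alpha$ is symmetric. I expect the main obstacle to be bookkeeping rather than conceptual: one must be careful that in Cases 2 and 3 the escape term is genuinely nonzero and that the factor $1/G_0(\alpha,\alpha)$ appearing in \eqref{eq:all-last-time} — which is $G_0^B(\alpha,\alpha)$ in Case 1, $G_0^A(\alpha,\alpha)$ in Case 2, and neither in Case 3 (cf. \eqref{eq:GA+GB=G}) — is handled with the correct identity in each case, so that the Wronskian-collapse step goes through uniformly.
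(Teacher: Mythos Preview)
Your proposal is correct and follows essentially the same route as the paper: both start from \eqref{eq:x-alpha}, compute $\p^x(H_\alpha=+\infty)$ via the scale-function limit, use $\E^x[e^{-qH_\alpha}]=\phi_q(x)/\phi_q(\alpha)$ (resp.\ $\psi_q(x)/\psi_q(\alpha)$), and invoke the key identity $\phi_q^A=\phi_q$ on $\I^A$ (resp.\ $\psi_q^B=\psi_q$ on $\I^B$) to turn the hitting-time factor into $G_q^A(x,\alpha)/G_q^A(\alpha,\alpha)$ (resp.\ $G_q^B(x,\alpha)/G_q^B(\alpha,\alpha)$), after which Proposition~\ref{prop:all} yields \eqref{eq:x-alpha-all-last-time}.

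The only genuine difference is in how you extract the Green-function decomposition \eqref{eq:x-alpha-Green-decomp}. The paper re-invokes the last-passage density formula to write $\E^x[e^{-q\lambda_\alpha}]=\p^x(H_\alpha=+\infty)+G_q(x,\alpha)/G_0(\alpha,\alpha)$, then compares this with the already-established \eqref{eq:x-alpha-all-last-time} and cancels the escape term and the $G_0(\alpha,\alpha)$ factor. You instead verify \eqref{eq:x-alpha-Green-decomp} by a direct Wronskian computation: plugging in \eqref{eq:Green-A-at-alpha} and \eqref{eq:Green-B-at-alpha} collapses $\frac{G_q^A(\alpha,\alpha)G_q^B(\alpha,\alpha)}{G_q^A(\alpha,\alpha)+G_q^B(\alpha,\alpha)}$ to $\psi_q(\alpha)\phi_q(\alpha)/w_q=G_q(\alpha,\alpha)$, and the hitting-time ratio then produces $G_q(x,\alpha)$. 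Your route is self-contained (it does not need the density formula for $\lambda_\alpha$ a second time) and makes the case-independence of \eqref{eq:x-alpha-Green-decomp} immediately transparent, since the Wronskian identity does not care which boundary is attracting; the paper's route is shorter and keeps the probabilistic interpretation visible. Both are equally valid and the bookkeeping concern you flag is real but harmless, since \eqref{eq:all-last-time} already packages the three cases uniformly through $G_0(\alpha,\alpha)$.
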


\begin{proof}
First, note that
\[\p^x(H_\alpha=+\infty)=\begin{cases}
	\p^x(H_r\le H_\alpha)=\lim\limits_{d\uparrow r} \frac{s(x)-s(\alpha)}{s(d)-s(\alpha)}, \quad x\ge\alpha,\\
	\p^x(H_\ell\le H_\alpha)=\lim\limits_{c\downarrow \ell}\frac{s(\alpha)-s(x)}{s(\alpha)-s(c)}, \quad x\le\alpha.
\end{cases}\]
Take the case of $x\ge \alpha$. In this case, $\E^x[e^{-qH_\alpha}]=\frac{\phi_q(x)}{\phi_q(\alpha)}$ (see \eqref{eq:hitting-time-laplace}). Note also that from \eqref{eq:green-q},
\[\frac{\phi_q(x)}{\phi_q(\alpha)}G_q(\alpha, \alpha)=\frac{\phi_q(x)}{\phi_q(\alpha)}\frac{\phi_q(\alpha)\psi_q(\alpha)}{w_q}=G_q(x, \alpha).\]  Based on Section \ref{sec: procedure}, we know that $\phi_q^A(x)=\phi_q(x)$ on $\I^A=[\alpha,r)$. Then, plugging \eqref{eq:all-last-time} into the right-hand side of \eqref{eq:x-alpha} yields the first equation of \eqref{eq:x-alpha-all-last-time}.
The case of $x\le \alpha$ is similar. The difference is that we have $\psi_q^B(x)=\psi_q(x)$ on $\I^B=(\ell,\alpha]$ as was shown in Section \ref{sec: procedure} based on boundary conditions.

Finally, for the decomposition of the Green function of $X$, we observe that
\begin{align*}
\E^x[e^{-q\lambda_\alpha}]=\p^x(\lambda_\alpha=0)+\E^x[e^{-q\lambda_\alpha}\cdot \1_{\{\lambda_\alpha>0\}}]&=\p^x(H_\alpha=+\infty)+\int_0^\infty e^{-qt} \frac{p(t;x,\alpha)}{G_0(\alpha,\alpha)}\diff t\\
&=\p^x(H_\alpha=+\infty)+\frac{G_q(x,\alpha)}{G_0(\alpha,\alpha)}.
\end{align*}
See \eqref{eq:lambda-laplace}.
From \eqref{eq:x-alpha} and \eqref{eq:x-alpha-all-last-time}, this equation implies that
\[\frac{G_q(x,\alpha)}{G_0(\alpha,\alpha)}=\begin{cases}
	\frac{G_q^A(x,\alpha)} {G^A_q(\alpha, \alpha)+G^B_q(\alpha, \alpha)}\cdot \frac{G^B_q(\alpha, \alpha)}{G_0(\alpha, \alpha)}, \quad x\ge  \alpha,\\	
	\frac{G_q^B(x,\alpha)}{G^A_q(\alpha, \alpha)+G^B_q(\alpha, \alpha)}\cdot \frac{G_q^A(\alpha, \alpha)}{G_0(\alpha, \alpha)}, \quad x\le \alpha,
\end{cases}\]
from which we obtain \eqref{eq:x-alpha-Green-decomp}.
\end{proof}

\subsection{\textbf{Example:} Ornstein-Uhlenbeck process}\label{sec:OU}
Let us illustrate the Green function's decomposition \eqref{eq:x-alpha-Green-decomp} for Case 3: $s(\ell)>-\infty, s(r)<+\infty$ by using a more complicated example of the Ornstein-Uhlenbeck process. Let $X$ follow the dynamics $\diff X_t=-\kappa X_t\diff t+\diff W_t$ on $\mathcal{I}=(-\infty,+\infty)$ with constant $\kappa<0$ and a standard Brownian motion $W$. The scale function of $X$ is given by $s(x)=\int_0^xe^{\kappa y^2}\diff y$. We see that both boundaries $-\infty$ and $+\infty$ are attracting, so that $X$ belongs to Case 3.

The increasing and decreasing linearly independent solutions of $\G f=-\kappa xf'+\frac{1}{2}f''=qf$ are given by \[\psi_q(x)=e^{-{|\kappa|\frac{x^2}{2}}}D_{-\left(\frac{q}{|\kappa|}+1\right)}(-x\sqrt{2|\kappa|}) \conn
\phi_q(x)=e^{-{|\kappa|\frac{x^2}{2}}}D_{-\left(\frac{q}{|\kappa|}+1\right)}(x\sqrt{2|\kappa|}).\]
Here $D_{-\nu} (x)$ and $D_{-\nu} (-x)$ are parabolic cylinder functions which represent linearly independent solutions of the differential equation $f''(x)-\left(\frac{x^2}{4}+\frac{2\nu-1}{2}\right)f(x)=0$ for $x\in\R$ (see \citet[Appendix 2.9]{borodina-salminen}). We could proceed without computing the constants $a_1, a_2, b_1, b_2, w_q^A$, and $w_q^B$ below as in Section \ref{sec:example}. However, since the solutions involve special functions, we shall record them.

Refer to the procedure in Section \ref{sec: procedure}. First, consider $X^A$ on $\mathcal{I}^A$. We have $\psi_q^A(x)=a_1\psi_q(x)+a_2\phi_q(x)$ with $a_1=\frac{\Gamma\left(\frac{q}{|\kappa|}+1\right)}{\sqrt{2\pi}}e^{{|\kappa|\frac{\alpha^2}{2}}}D_{-\frac{q}{|\kappa|}}(\alpha\sqrt{2|\kappa|})$ and $a_2=\frac{\Gamma\left(\frac{q}{|\kappa|}+1\right)}{\sqrt{2\pi}}e^{{|\kappa|\frac{\alpha^2}{2}}}D_{-\frac{q}{|\kappa|}}(-\alpha\sqrt{2|\kappa|})$, while $\phi_q^A(x)=\phi_q(x)$ on $\mathcal{I}^A$. The Wronskian is given by
$w_q^A=\sqrt{2|\kappa|}e^{{|\kappa|\frac{\alpha^2}{2}}}D_{-\frac{q}{|\kappa|}}(\alpha\sqrt{2|\kappa|})$.

Next, consider $X^B$ on $\mathcal{I}^B$. We have $\phi_q^B(x)=b_1\psi_q(x)+b_2\phi_q(x)$ with $b_1=\frac{\Gamma\left(\frac{q}{|\kappa|}+1\right)}{\sqrt{2\pi}}e^{{|\kappa|\frac{\alpha^2}{2}}}D_{-\frac{q}{|\kappa|}}(\alpha\sqrt{2|\kappa|})$ and $b_2=\frac{\Gamma\left(\frac{q}{|\kappa|}+1\right)}{\sqrt{2\pi}}e^{{|\kappa|\frac{\alpha^2}{2}}}D_{-\frac{q}{|\kappa|}}(-\alpha\sqrt{2|\kappa|})$, while $\psi_q^B(x)=\psi_q(x)$ on $\mathcal{I}^B$. The Wronskian is given by $w_q^B=\sqrt{2|\kappa|}e^{{|\kappa|\frac{\alpha^2}{2}}}D_{-\frac{q}{|\kappa|}}(-\alpha\sqrt{2|\kappa|})$.
Feed the above facts regarding  $X^A$ and $X^B$ into the right-hand side of the decomposition formula \eqref{eq:x-alpha-Green-decomp} in Theorem \ref{theorem} to obtain
\begin{align*}
G_q(x,\alpha)=\begin{cases}
\frac{e^{-|k|\frac{\alpha^2}{2}}D_{-\left(\frac{q}{|\kappa|}+1\right)}(-\alpha\sqrt{2|\kappa|})\times e^{-|k|\frac{x^2}{2}}D_{-\left(\frac{q}{|\kappa|}+1\right)}(x\sqrt{2|\kappa|})}{\frac{2\sqrt{\pi|\kappa|}}{\Gamma\left(\frac{q}{|\kappa|}+1\right)}}, \quad x\ge \alpha,\\
\frac{e^{-|k|\frac{\alpha^2}{2}}D_{-\left(\frac{q}{|\kappa|}+1\right)}(\alpha\sqrt{2|\kappa|})\times e^{-|k|\frac{x^2}{2}}D_{-\left(\frac{q}{|\kappa|}+1\right)}(-x\sqrt{2|\kappa|})}{\frac{2\sqrt{\pi|\kappa|}}{\Gamma\left(\frac{q}{|\kappa|}+1\right)}}, \quad x\le\alpha,\\
\end{cases}
\end{align*}
which is confirmed to be the same as the Green function in \citet[Appendix 1.24]{borodina-salminen}.

\section{Practical Applications}\label{sec:appl}
\subsection{Parameter-switching diffusion process}\label{sec:parameter-switch}

Let  $X$  be a diffusion on $\mathcal{I}=(\ell,r)\subset\R$ whose parameters are different above and below some fixed level $\alpha\in \mathcal{I}$.  This type of diffusion is useful in treating real-life problems.  For example, \citet[Section 6.5]{karatzas} considers a stochastic control problem.

Let the infinitesimal drift and diffusion parameters of $X$ be $\mu(\cdot)$ and $\sigma(\cdot)$, respectively, such that
\begin{align*}
	\mu(x)&=\mu^B(x)\1_{(\ell,\alpha)}(x)+\mu^A(x)\1_{[\alpha,r)}(x),\\ \nn
	\sigma(x)&=\sigma^B(x)\1_{(\ell,\alpha)}(x)+\sigma^A(x)\1_{[\alpha,r)}(x).
\end{align*}
The parameters $\mu^A(\cdot)$, $\sigma^A(\cdot)$, $\mu^B(\cdot)$, $\sigma^B(\cdot)$ are such that $X$ is transient, belonging to either Case 1, 2 or 3. The decomposition method of the last passage time  is convenient when dealing with such processes. Proposition 3 allows us to bypass (often) hard calculations related to $X$ (with switching parameters) and to reduce the object to two processes $X^A$ and $X^B$ with no switching parameters. Moreover, one may find \eqref{eq:x-alpha-Green-decomp} useful in identifying the Green function of parameter-switching diffusion. We point out that there is no general established method for obtaining the Green function of such diffusions explicitly in the previous literature.

Note that in the case of switching parameters, we decompose the process into two processes and deal with them separately. Therefore, if the conditions imposed on parameters (see Section \ref{sec:setup}) hold for each diffusion separately, the results apply without any need of modification.

We illustrate the decomposition scheme by the example of a Brownian motion $X$ with two-valued drift on $\R$:
\begin{align}\label{eq:X-switching}
	\diff X_t=\mu(X_t)\diff t+\diff W_t	\quad \text{with}\quad 
	\mu(X_t)=\begin{cases}
		\mu^A, \quad X_t\ge 0,\\
		\mu^B, \quad X_t< 0, 
	\end{cases}
\end{align}
with constants $\mu^A<0$, $\mu^B<0$ and $W$ a standard one-dimensional Brownian motion. Note that $X$ belongs to Case 1. Based on \eqref{eq:X-time-change}, we transform $X$ into $X^A$ and $X^B$ which are two Brownian motions with constant drifts reflecting at $0$ from above and below, respectively. Proposition \ref{prop:laplace-product} allows us to treat these two processes separately.

Consider $X^A$ on $[0,+\infty)$ with drift $\mu^A$ and $X^B$ on $(-\infty,0]$ with drift $\mu^B$. From Section \ref{sec:example}, we obtain

\[G_q^A(0,0)=\frac{1}{\sqrt{(\mu^A)^2+2q}+\mu^A}, \quad 	G_q^B(0,0)=\frac{1}{\sqrt{(\mu^B)^2+2q}-\mu^B}, \quad G^B_0(0,0)=-\frac{1}{2\mu^B}. \]
Then, Proposition \ref{prop:laplace-product} yields
\begin{align}\label{eq:lambda-laplace-switching}
	\E^0[e^{-q\lambda_0}]&=\frac{G_q^A(0, 0)}{G^A_q(0, 0)+G^B_q(0,0)}\cdot \frac{G^B_q(0, 0)}{G^B_0(0, 0)}=\frac{-2\mu^B}{\mu^A-\mu^B+\sqrt{(\mu^A)^2+2q}+\sqrt{(\mu^B)^2+2q}}.
\end{align}
This provides the Laplace transform of the last passage time for $X$ in \eqref{eq:X-switching} which has the switching parameters.
This result can be confirmed by \cite{benes1980} where they derive the Laplace transform of the transition density function (with respect to the Lebesgue measure) of $X$ in \eqref{eq:X-switching}. For this, they use the symmetry of the Brownian motion, the forward Kolmogorov equation satisfied by the transition density function, and a linear system of equations based on various conditions satisfied by the density's Laplace transform. Note that we have managed to obtain \eqref{eq:lambda-laplace-switching} without computing this transition density.

Moreover, by Proposition \ref{prop:all} the Green function of the parameter-switching diffusion $X$ is found to be
\[
G_q(0,0)=\frac{1}{\mu^A-\mu^B+\sqrt{(\mu^A)^2+2q}+\sqrt{(\mu^B)^2+2q}}.
\]
Again, there is no need to integrate the transition density function for this result.  Since the Green function appears in various contexts of mathematical problems, it is convenient  to have it available in its explicit form for a compounded diffusion like this. The method presented in this section is applicable to all transient diffusions.

\subsection{Leverage effect}\label{sec:financial-appl}
Consider a geometric Brownian motion $X$ on $\mathcal{I}=(0,+\infty)$: $\diff X_t=\mu(X_t) X_t\diff t+\sigma(X_t) X_t\diff W_t$ with switching parameters
\begin{align*}
		\mu(x)=\mu^B\1_{(-\infty,\alpha)}(x)+\mu^A\1_{[\alpha,\infty)}(x) \conn
	\sigma(x)=\sigma^B\1_{(-\infty,\alpha)}(x)+\sigma^A\1_{[\alpha,\infty)}(x),
\end{align*}
where $W$ denotes a standard Brownian motion. The parameters are such that $\nu^i:=\frac{\mu^i}{(\sigma^i)^2}-\frac{1}{2}< 0$ for $i=A, B$ which means that Assumption \ref{assump-s} is satisfied.

Let us set $X_0=\alpha$. We may interpret $X$ as a stock market price. When its value decreases beyond $\alpha$, the ratio of a firm's debt over its equity market value (leverage ratio) increases, given the value of debt is unchanged. In financial markets, it is observed that such increase in leverage is associated with higher stock price volatility. See, for example, \citet{bae2007}. Then, the assumption $\sigma^B>\sigma^A$ would capture such leverage effect.

Let us decompose $X$ into $X^A$ on $[\alpha,+\infty)$ and $X^B$ on $(0,\alpha]$, both reflecting at $\alpha$, and use the procedure in Section \ref{sec: procedure}.
For $i=A,B$, the increasing and decreasing linearly independent solutions to $\G^i f:=\mu^ixf'(x)+\frac{1}{2}(\sigma^i)^2x^2f''(x)=qf$ for $q>0$ are given by
\[
x^{-\nu^i+\sqrt{(\nu^i)^2+\frac{2q}{(\sigma^i)^2}}} \conn x^{-\nu^i-\sqrt{(\nu^i)^2+\frac{2q}{(\sigma^i)^2}}}, \quad x\in\mathcal{I}.\]
The scale function of a diffusion with the generator $\G^i$ is  $-\frac{x^{-2\nu^i}}{2\nu^i}$ for $x\in\mathcal{I}$. By applying \eqref{eq:Green-A-at-alpha}, \eqref{eq:Green-B-at-alpha}, and \eqref{eq:G=GB}, we obtain
\[G_q^A(\alpha,\alpha)=\frac{\alpha^{-2\nu^A}}{\nu^A+\sqrt{(\nu^A)^2+\frac{2q}{(\sigma^A)^2}}}, \quad G_q^B(\alpha,\alpha)=\frac{\alpha^{-2\nu^B}}{-\nu^B+\sqrt{(\nu^B)^2+\frac{2q}{(\sigma^B)^2}}}, \conn G_0^B(\alpha,\alpha)=-\frac{\alpha^{-2\nu^B}}{2\nu^B}.\]
Then, Proposition \ref{prop:laplace-product} produces

\begin{equation*}
	\E^\alpha[e^{-q\lambda_\alpha}]
	=\frac{-2\nu^B\alpha^{-2\nu^A}}
	{\alpha^{-2\nu^A}\cdot \left(-\nu^B+\sqrt{(\nu^B)^2+\frac{2q}{(\sigma^B)^2}}\right) +\alpha^{-2\nu^B}\cdot \left(\nu^A+\sqrt{(\nu^A)^2+\frac{2q}{(\sigma^A)^2}}\right)}.
\end{equation*}
If there is no switch in the parameters, we would have $\mu:=\mu^A=\mu^B$, $\sigma:=\sigma^A=\sigma^B$ with $\nu=\frac{\mu}{\sigma^2}-\frac{1}{2}<0$ and
\[\E^\alpha[e^{-q\lambda_\alpha}]=\frac{-\nu}{\sqrt{\nu^2+\frac{2q}{\sigma^2}}}.\]

Below we illustrate how the switch in parameters affects the distribution of $\lambda_\alpha$ by inverting the Laplace transform to obtain the probability density. Specifically, we focus on the effect arising from the switch in the diffusion parameter. Let us set $\alpha=100$, $\mu^A=\mu^B=-0.1$, $\sigma^A=0.75$, $\sigma^B=1.5$. Thus, the diffusion parameter is higher in the region below $\alpha$, and this setup  is consistent with the leverage effect discussed in the beginning of this subsection. The probability density of the last passage time is given in Figure \ref{fig:inverted_density} where we also present the density for the case of non-switching parameters: $\mu=\mu^A$ and $\sigma=\sigma^A$. The graph in yellow is the last passage time density of the switching parameter diffusion, while the blue one is that of the non-switching diffusion. We drew these graphs by InverseLaplaceTransform function in Wolfram Mathematica 14.0. We see that there is a higher probability of $\lambda_\alpha$ occurring earlier (to the left of the intersection of the two graphs) in the switching parameter case. This is due to the density in the switching parameter case having higher values, compared to the non-switching case, in the lower range of the horizontal axis. This information is useful for risk management.

\begin{figure}[H]
	\caption{\small Probability density of $\lambda_\alpha$ for a geometric Brownian motion with $\alpha=100$. The yellow  density is the  case of switching parameters $\mu^A=\mu^B=-0.1$, $\sigma^A=0.75$, $\sigma^B=1.5$. The blue density is the case of non-switching parameters $\mu=-0.1$, $\sigma=0.75$.}
	\includegraphics[scale=0.9]{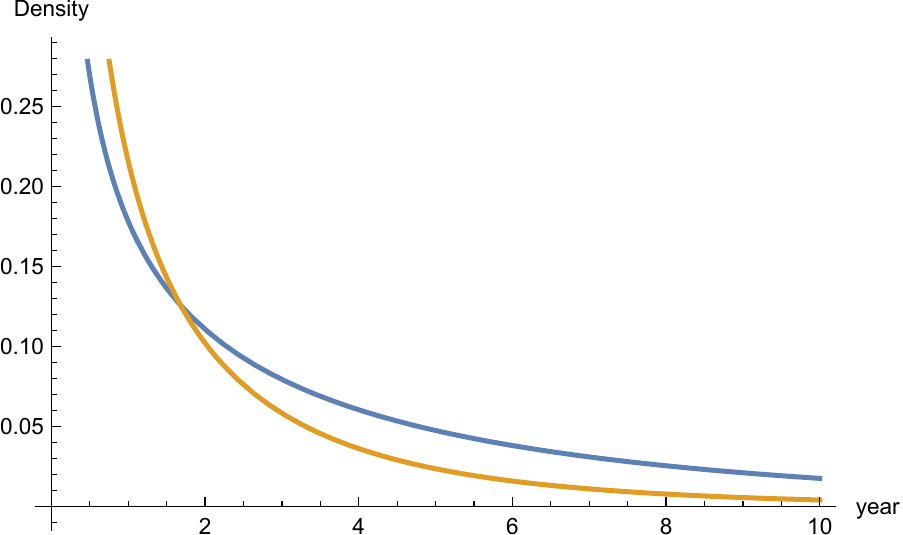}
	\label{fig:inverted_density}
\end{figure}

\setcounter{section}{0}
\appendix
\section{}
\subsection{Elements of a Diffusion}\label{app:elements}
The scale function, speed measure, and killing measure represent basic characteristics of a regular diffusion $X$. The following definitions are based on \citet[Chapter II.1.4]{borodina-salminen}. The scale function $s:\mathcal{I}\to\R$ is an increasing continuous function. If there is no killing inside $\mathcal{I}$, we have the following relation for the first hitting times of $a$ and $b$ (denoted by $H_a$ and $H_b$, respectively):
\begin{equation}\label{eq:scale-def}
\p^x(H_a<H_b)=\frac{s(b)-s(x)}{s(b)-s(a)}, \quad \ell<a\le x\le b<r.
\end{equation}

The speed measure $m(\cdot)$ is a measure on $\mathcal{B}(\mathcal{I})$ such that $m((a,b))\in(0,+\infty)$ for $\ell<a<b<r$. A regular diffusion has a transition density with respect to its speed measure. Specifically, the speed measure satisfies the following relation for every $t>0$ and $x\in \mathcal{I}$:
\begin{align*}
\p^x(X_t\in A)=\int_A P_t(x, \diff y)=\int_Ap(t;x,y)m(\diff y), \quad A\in\B(\mathcal{I})
\end{align*}
where $P_t$ is the transition function and $p(t;x,y)$ denotes the transition density with respect to the speed measure.

The killing measure $k(\cdot)$ is a measure on $\mathcal{B}(\mathcal{I})$ such that $k((a,b))\in[0,+\infty)$ for $\ell<a<b<r$. The killing measure is related to the distribution of the location of the diffusion at its lifetime $	\xi=\inf\{t: X_t\notin\mathcal{I}\}$:
\[\p^x(X_{\xi-}\in A, \xi<t)=\int_0^t\left(\int_A p(s;x,y)k(\diff y)\right)\diff s, \quad A\in\mathcal{B}(\mathcal{I}).\]

We can transform the density $p$ using an excessive function $h$, thus changing the dynamics of $X$. The resulting new diffusion is called \emph{Doob's $h$-transform} of $X$. Refer to \citet[Chapter II.5]{borodina-salminen}. A Borel-measurable function $h: \mathcal{I}\to\R_+$ is called excessive if
\[\E^x[h(X_t)]\le h(x) \quad \textnormal{for all} \; x\in\mathcal{I} \; \textnormal{and} \; t\ge 0,\]
\[\lim_{t\downarrow 0}\E^x[h(X_t)]=h(x) \quad \textnormal{for all} \; x\in\mathcal{I}.\]
We consider a new transition function $P^h_t$ which satisfies
the following equation for $t>0$ and $x\in\mathcal{I}$:
\[P^h_t(x,A)=\int_A\frac{h(y)}{h(x)}p(t;x,y)m(\diff y), \quad A\in\mathcal{B}(\mathcal{I}).\]
This transformation is well-defined because for a regular diffusion $h\equiv 0$ or $h(x)>0$ for all $x\in\mathcal{I}$. The resulting new diffusion $X^h$ with the transition function $P_t^h$ is the $h$-transform of $X$. The scale function and speed measure of $X^h$ can be easily obtained by using the  procedure in \citet[Chapter 15.9]{karlin-book}.

\subsubsection{Analysis of $\psi_q^*$, $\phi_q^*$, $\tilde{\psi}_q$, $\tilde{\phi}_q$}\label{app:case3-analysis} Consider Case 3 and refer to the proof of Proposition \ref{prop:all}. Recall that $\psi_0$, $\psi_q$ are increasing while $\phi_0$, $\phi_q$ are decreasing on $\mathcal{I}$. Then, we see that $\psi_q^*=\frac{\psi_q}{\phi_0}$ is increasing while $\tilde{\phi}_q=\frac{\phi_q}{\psi_0}$ is decreasing on $\mathcal{I}$.

 Next, take any $x\le z$ in $\mathcal{I}$. We have $\p^x(H_z<+\infty)=\p^x(H_z<H_\ell)=\frac{s(x)-s(\ell)}{s(z)-s(\ell)}=\frac{\psi_0(x)}{\psi_0(z)}$ using \eqref{eq:scale-def} and \eqref{eq:case3-phi-psi}. Note that
 $\E^x[e^{-qH_z}]\nearrow \p^x(H_z<+\infty)$ as $q\to 0$. Thus, we obtain with \eqref{eq:hitting-time-laplace}
 \[\frac{\psi_q(x)}{\psi_q(z)}\le \frac{\psi_0(x)}{\psi_0(z)}\]
 from which we see that $\tilde{\psi}_q(x)=\frac{\psi_q(x)}{\psi_0(x)}\le\frac{\psi_q(z)}{\psi_0(z)}=\tilde{\psi}_q(z)$. This proves that $\tilde{\psi}_q$ is increasing on $\mathcal{I}$.

To see that $\phi_q^*$ is a decreasing function, take any $x\ge z$ in $\mathcal{I}$ and observe that $\p^x(H_z<+\infty)=\p^x(H_z<H_r)=\frac{s(r)-s(x)}{s(r)-s(z)}=\frac{\phi_0(x)}{\phi_0(z)}$ using \eqref{eq:scale-def} and \eqref{eq:case3-phi-psi}. Using the same argument as above, by \eqref{eq:hitting-time-laplace} we have
 \[\frac{\phi_q(x)}{\phi_q(z)}\le \frac{\phi_0(x)}{\phi_0(z)}\]
from which we see that $\phi_q^*(x)=\frac{\phi_q(x)}{\phi_0(x)}\le\frac{\phi_q(z)}{\phi_0(z)}=\phi_q^*(z)$. This proves that $\phi_q^*$ is decreasing on $\mathcal{I}$.

\subsection{Proof of Equation \eqref{lemma:psi-phi}}\label{app:proof-phi-psi}
Under Assumption \ref{assump-s}, the functions $\psi_0$ and $\phi_0$ in \eqref{eq:green-0} satisfy the following conditions:
	\begin{equation*}
		\phi_0\equiv1, \quad \psi_0(\ell+)=0, \conn \psi_0(r-)=+\infty.
	\end{equation*}

\begin{proof}
	Let $\ell<x\le y\le z<r$. Then, by \eqref{eq:hitting-time-laplace}
	\begin{align*}\label{eq:boundary-derivations}
		\lim_{q\downarrow 0}\frac{G_q(x,y)}{G_q(y,z)}&=\lim_{q\downarrow 0}\frac{\psi_q(x)\phi_q(y)}{\psi_q(y)\phi_q(z)}=\lim_{q\downarrow 0}\frac{\E^x\left[e^{-qH_y}\right]}{\E^z\left[e^{-qH_y}\right]}
		=\frac{\p^x(H_y<+\infty)}{\p^z(H_y<+\infty)}.
	\end{align*}
	On the other hand, by \eqref{eq:G0} and \eqref{eq:green-0} we obtain
	\begin{equation*}
		\lim_{q\downarrow 0}\frac{G_q(x,y)}{G_q(y,z)}=\frac{G_0(x,y)}{G_0(y,z)}=\frac{\psi_0(x)\phi_0(y)}{\psi_0(y)\phi_0(z)}.
	\end{equation*}
	Hence
	\begin{equation}\label{eq:prob-G0-connection}
		\frac{\p^x(H_y<+\infty)}{\p^z(H_y<+\infty)}=\frac{\psi_0(x)\phi_0(y)}{\psi_0(y)\phi_0(z)}.
	\end{equation}
	For the killing boundary $\ell$, $\lim_{x\downarrow\ell}\p^x(H_y<+\infty)=0$ and we obtain $\psi_0(\ell+)=0$.

As $\p^z(H_y<+\infty)=1$, the right-hand side in \eqref{eq:prob-G0-connection} does not depend on $z$. Thus, the function $\phi_0(z)$ takes the same value for every $z$ and we may set $\phi_0\equiv1$. By substituting $\p^z(H_y<+\infty)=1$ in \eqref{eq:prob-G0-connection}, we also obtain $\psi_0(r-)=+\infty$ due to $\lim_{y\uparrow r}\p^x(H_y<+\infty)=0$.
\end{proof}

\subsection{Analysis of $\psi_q^A$ and $\phi_q^B$}\label{app:increase-decrease}
Let $q>0$. As shown in Section \ref{sec: procedure}, $\psi_q^A=a_1\psi_q+a_2\phi_q$ and $\phi_q^B=b_1\psi_q+b_2\phi_q$ with positive constants $a_1$, $a_2$, $b_1$, $b_2$ given in  \eqref{eq:a1a2} and \eqref{eq:b1b2}. Note that due to the boundary conditions $(\psi_q^A)^+(\alpha)=0$ and $(\phi_q^B)^-(\alpha)=0$, we have  
\[(\psi_q^A)^+(\alpha)=a_1\psi_q'(\alpha)+a_2\phi_q'(\alpha)=0 \conn 
(\phi_q^B)^-(\alpha)=b_1\psi_q'(\alpha)+b_2\phi_q'(\alpha)=0.\]
To prove that $\psi_q^A$ is increasing on $[\alpha,r)$ and $\phi_q^B$ is decreasing on $(\ell,\alpha]$, we consider a function $g(x)=a\psi_q(x)+b\phi_q(x)$ on $\mathcal{I}$ with arbitrary constants $a>0$ and $b>0$. We show that $g$ attains its local minimum at $\alpha$ if $g'(\alpha)=0$ and that there is no local maximum.

Note that $g$ is a positive solution to ODE $\G f=qf$ and
\[\frac{1}{2}\sigma^2(x)g''(x)+\mu(x)g'(x)=qg(x), \quad x\in\mathcal{I}.\]
Thus, $g'(\alpha)=0$ implies $g''(\alpha)>0$ and we have a local minimum at $\alpha$. Since $g''(x)$ should always be positive when $g'(x)=0$, there is no local maximum. By the continuity of $g$, we see that $g$ is decreasing on $(\ell,a]$ and increasing on $[\alpha,r)$.

Finally, we show that $\phi_0^B$ in Section \ref{sec: procedure} is constant on $(\ell,\alpha]$. Using $\phi_0\equiv1$ from \eqref{lemma:psi-phi} and the constants $c_1$, $c_2$ derived in Section \ref{sec: procedure}, we see that $\phi_0^B(x)=c_1\psi_0(x)+c_2\phi_0(x)=0\cdot w_0(s(x)-s(\ell))+1=1$ for $x\in(\ell,\alpha]$.


\end{document}